\newlength{\abstractwidth}
\flushbottom \thispagestyle{empty} \pagestyle{plain}
\renewcommand{\thefootnote}{\fnsymbol{footnote}}
\renewcommand{\thanks}[1]{\footnote{#1}} % Use this for footnotes
\newcommand{\starttext}{ \setcounter{footnote}{0}
\renewcommand{\thefootnote}{\arabic{footnote}}}
\newcommand{\be}{\begin{equation}}
\newcommand{\bea}{\begin{eqnarray}}
\newcommand{\eea}{\end{eqnarray}} 
\newcommand{\ee}{\end{equation}}
 \newcommand{\<}{\langle}
\renewcommand{\>}{\rangle}
\def\ba{\begin{eqnarray}}
\def\ea{\end{eqnarray}}
\numberwithin{equation}{section}
\def\C{\mathbb C^n}
\newtheorem{prop}{Proposition}[section]
\newtheorem{theorem}[prop]{Theorem}
\newtheorem{lemma}[prop]{Lemma}
\newtheorem{corollary}[prop]{Corollary}
\newtheorem{remark}[prop]{Remark}
\newtheorem{example}[prop]{Example}
\newtheorem{question}[prop]{Question}
\def\begeq{\begin{equation}}
\def\endeq{\end{equation}}
\def\<{\langle}
\def\>{\rangle}
\def\({\left(}
\def\){\right)}
\def\pa{\partial}
\def\p{\partial}
\def\pa{\partial}
\def\om{\omega}
\def\Om{\Omega}
\def\we{\wedge}
\def\be{\beta}
\def\ep{\epsilon}
\def\dc{dd^c}
\def\ddb{\partial\bar\partial}
\def\n{\nabla}
\DeclareMathOperator{\ksh}{\it \mathcal {SH}_k\rm}
 \DeclareMathOperator{\la}{\lambda}
\DeclareMathOperator{\co}{\mathbb C}
\begin{document}

\starttext \baselineskip=15pt \setcounter{footnote}{0}

\begin{center}
{\Large\bf
Regularity of Degenerate Hessian Equations}
\end{center}

\medskip

\centerline{S\l awomir Dinew, Szymon Pli\'s, and Xiangwen Zhang\footnote{The first and second named authors
were supported by the NCN grant 2013/08/A/ST1/00312. The third named author was supported by the Simons Collaboration
Grant-523313.\\ Keywords: degenerate complex Hessian equations, $C^{1, 1}$ regularity, optimal exponent.}}

\bigskip

\begin{abstract}
{\small We show a second order a priori estimate for solutions to the complex $k$-Hessian equation on a compact K\"ahler manifold provided the $(k$-$1)$-st root of the right hand side is $\mathcal C^{1,1}$. This improves an estimate of Hou-Ma-Wu \cite{HMW}. An example is provided to show that the exponent is sharp.}
\end{abstract}

%\title{\bf Regularity of degenerate Hessian equations}

%\author{S\l awomir Dinew, Szymon Pli\'s, and Xiangwen Zhang}
%\date{}
%
%%\maketitle
%
%\vspace{-1cm}
%\begin{abstract}
%We show a second order a priori estimate for solutions to the complex $k$-Hessian equation on a compact K\"ahler manifold provided the $(k$-$1)$-th root of the right hand side is $\mathcal C^{1,1}$. This improves an estimate of Hou-Ma-Wu \cite{HMW}. An example is provided to show that the exponent is sharp.
%\end{abstract}

\section{Introduction}
\par
Geometrically motivated complex fully nonlinear elliptic partial differential equations have received a lot of attention 
recently (see \cite{PPZ, PPZ1, PT, Sze1, TW} which is by far an incomplete list of recent important contributions).
The solvability of such equations is usually studied through the continuity method and boils down to establishing a priori estimates just as in the classical approach of S. T. Yau \cite{Y}. 

In general the considered problems are reducible to a scalar equation satisfied by a real valued function $u$ defined of a compact complex manifold $X$ equipped with a fixed   Hermitian form $\om$. Quite often additonal assumptions such as k\"ahlerness of $\om$ are imposed and then the real $(1,1)$-form
$\omega+i\pa\bar{\pa}u$ is the geometric object with the desired properties. Arguably the most natural geometric assumption is that  $\omega+i\pa\bar{\pa}u$ defines a {\it metric} i.e. it is positive definite.
However, it often happens that the very nature of the nonlinearity imposes more general {\it admissibility} conditions (see for example \cite{Hou, HMW, PPZ, Sze1}). This lack of positivity usually contributes significantly to the technical difficulty of the estimates.

\smallskip

In this note we deal with the {\it complex Hessian equations} on a compact K\"ahler manifold $(X,\om)$ with $dim_{\co}X=n$. These interpolate between the Laplace equation (in the case $k=1$) and the Monge-Amp\`ere equation in the case $k=n$. They are defined by 

\begin{equation}\label{sigmak}
(\om+i\ddb u)^k\we\om^{n-k}=f\om^n,
\end{equation}
where the given nonnegative function $f$ satisfies the necessary compatibility condition 
$\int_X\om^n=\int_Xf\om^n.$

For smooth $u$ the {\it admissibility} condition imposed of the class of solutions $u$ is that
\[
(\om+i\ddb u)^j\we\om^{n-j}\geq 0,\ j=1,2,\cdots, k.
\]
We denote the class of such functions by $\ksh(X,\om)$. Note also that adding a constant to a solution $u$ doesn't change the equation
(\ref{sigmak}), thus we normalize the solutions by imposing the condition $\int_Xu\,\om^n=0$.
The solvability of the equation (\ref{sigmak}) was established for smooth strictly positive right hand side data $f$ satisfying the compatiblity condition through the works of Hou-Ma-Wu \cite{HMW} who proved the uniform and second order a priori estimates and the first named author and Ko\l odziej \cite{DK} who obtained the missing gradient estimate by an indirect blow-up argument. 

\smallskip

Having the existence of smooth solutions for smooth strictly positive data it is natural to address the regularity theory in the degenerate cases.  A situation of special interest is when the right hand side function is allowed to vanish.  Such a scenario, reminiscent of failure of strict ellipticity in linear PDEs, as a rule implies the occurence of {\it singular solutions}. In view of the classical theory in the Monge-Amp\`ere case (see \cite{B3, Gu}) the maximum one can expect in this setting is $\mathcal C^{1,1}$ regularity.

A natural question appears then about optimal conditions implying that $u\in \mathcal C^{1,1}$. Note that in \cite{HMW} the authors have proven that the complex Hessian is controlled by the gradient of $u$ provided the $\mathcal C^2$ norm of $f^{1/ k}$ is under control. This may hold even if  $f$ vanishes somewhere, that is, we deal with the degenerate equation. The estimate in \cite{HMW} left the problem whether the exponent $1/k$ is the optimal one. We will show that one can further improve it to $1/(k-1)$ if $k\geq 2$ as one expects for the real case (for $k=1$ we have the Poisson equation whose regularity theory is classical). Our main result is the following:

%\smallskip

\begin{theorem} 
Let $f\geq 0$ be a function on compact K\"ahler manifold $(X, \omega)$ satisfying $\int_Xf\om^n=\int_X\om^n$. Assume that $f^{1/(k-1)}\in \mathcal C^{1,1}$. Then the solution $u$ to the equation (\ref{sigmak}) admits an a priori estimate 
\vspace{-0.3cm}
\begin{equation}
\vspace{-0.3cm}
\sup_{X} \|i\ddb u\|\leq C(1+\|Du\|^2)
\end{equation}
for some uniform constant $C$ dependents only on $n, k, X, \|f^{1/(k-1)}\|_{\mathcal C^{1, 1}}$, the oscillation $osc_Xu$ of $u$ and the lower bound on the bisectional curvature of $\om$. 
\end{theorem}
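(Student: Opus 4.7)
\emph{Setup.} Following the philosophy of Hou--Ma--Wu, I would consider the test function
$$\Phi(x) = \log h_{1\bar 1}(x) + \phi(|\nabla u|^2(x)) + \psi(u(x)),$$
where $h := \om + i\ddb u$ and, at each point, $h_{1\bar 1}$ denotes the largest eigenvalue of $h$ relative to $\om$ (the standard perturbation trick removes the non-smoothness at points with repeated top eigenvalue). Let $x_0$ be a maximum of $\Phi$, pick holomorphic normal coordinates for $\om$ at $x_0$ simultaneously diagonalizing $h$ with $h_{1\bar 1}\geq\cdots\geq h_{n\bar n}$, and let $L := F^{i\bar j}\pa_i\bar\pa_j$ be the linearized operator of $\sigma_k$ at $h$. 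The maximum principle yields $L\Phi \leq 0$ at $x_0$; the task is to convert this into the claimed bound $h_{1\bar 1}\leq C(1+|\nabla u|^2)$.

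\emph{Key step: control of the right-hand side.} Differentiating the equation twice in the direction $e_1$ and using the concavity of $\sigma_k^{1/k}$ in the form
$$F^{i\bar j,p\bar q}\,h_{i\bar j,1}\,h_{p\bar q,\bar 1}\;\leq\;\frac{k-1}{k}\,\frac{|f_1|^2}{f},$$
one gets, modulo curvature terms and the usual third-order error from $h_{1\bar 1,i\bar i}-h_{i\bar i,1\bar 1}$,
$$L(h_{1\bar 1})\;\geq\;f_{1\bar 1}\;-\;\frac{k-1}{k}\,\frac{|f_1|^2}{f}\;-\;C_{\Rm}\,h_{1\bar 1}\,\sum_i F^{i\bar i}.$$
Writing $f=g^{k-1}$ with $g:=f^{1/(k-1)}\in\mathcal C^{1,1}$ and expanding, the elementary identity $k(k-2)-(k-1)^2=-1$ produces the clean cancellation
$$f_{1\bar 1}\;-\;\frac{k-1}{k}\,\frac{|f_1|^2}{f}\;=\;(k-1)\,g^{k-2}\,g_{1\bar 1}\;-\;\frac{k-1}{k}\,g^{k-3}\,|g_1|^2.$$
For $k\geq 3$ both terms are uniformly bounded in terms of $\|g\|_{\mathcal C^{1,1}}$ since $g\geq 0$ and $Dg$ are bounded. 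For $k=2$ one has $\tfrac12 g^{-1}|g_1|^2$, which is controlled by $\|g\|_{\mathcal C^{1,1}}$ via the Glaeser-type inequality $|g_1|^2\leq 2\|g\|_{\mathcal C^{1,1}}\,g$ valid for any nonnegative $\mathcal C^{1,1}$ function. This is the sole point at which the improved assumption $f^{1/(k-1)}\in\mathcal C^{1,1}$ enters, and it is precisely what provides the lower bound $L(h_{1\bar 1})\geq -C-C'h_{1\bar 1}\sum_i F^{i\bar i}$ needed below, strictly strengthening the corresponding estimate of \cite{HMW}.

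\emph{Conclusion.} From this lower bound the argument is now essentially standard. One combines
$$L(\log h_{1\bar 1})=\frac{L(h_{1\bar 1})}{h_{1\bar 1}}-\frac{F^{i\bar i}|h_{1\bar 1,i}|^2}{h_{1\bar 1}^2}$$
with the computation of $L(|\nabla u|^2)$, which contributes the positive third-order quantity $\sum_{\ell,i}F^{i\bar i}(|u_{\ell i}|^2+|u_{\ell\bar i}|^2)$, together with $L(u)=kf-\sum_i F^{i\bar i}$; then chooses $\phi(s)=-\tfrac12\log(2K-s)$ and $\psi(t)=-At$ with $K,A$ large, depending on $\mathrm{osc}_X u$ and a would-be bound on $|\nabla u|^2$; and finally performs the Hou--Ma--Wu/Sz\'ekelyhidi index split into ``big'' $\{i:h_{i\bar i}>\delta h_{1\bar 1}\}$ and its complement, so as to absorb the bad term $F^{i\bar i}|h_{1\bar 1,i}|^2/h_{1\bar 1}^2$ into $\phi'\,L(|\nabla u|^2)$. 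Inserting everything into $L\Phi\leq 0$ yields $h_{1\bar 1}\leq C(1+|\nabla u|^2)$ at $x_0$, hence the theorem.

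\emph{Main obstacle.} The technical heart is the third-order analysis in the last paragraph: one has to verify that the index-splitting argument does not reintroduce any dependence on higher regularity of $f$, so that the lower bound on $L(h_{1\bar 1})$ derived in the key step is all that is needed. The genuinely delicate case is $k=2$, where the singular ratio $|g_1|^2/g$ survives the concavity step and is tamed only by Glaeser's inequality; this is what makes the exponent $1/(k-1)$ natural and, as the example to be exhibited later shows, sharp.
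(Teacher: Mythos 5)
Your key algebraic observation is correct and attractive: with $L=\sigma_k^{i\bar j}\partial_i\partial_{\bar j}$, the concavity of $\sigma_k^{1/k}$ does give $\sigma_k^{i\bar j,p\bar q}h_{i\bar j,1}h_{p\bar q,\bar 1}\le\frac{k-1}{k}\frac{|f_1|^2}{f}$, and writing $f=g^{k-1}$ the combination $f_{1\bar 1}-\frac{k-1}{k}\frac{|f_1|^2}{f}=(k-1)g^{k-2}g_{1\bar 1}-\frac{k-1}{k}g^{k-3}|g_1|^2$ is indeed bounded in terms of $\|g\|_{\mathcal C^{1,1}}$ (with Glaeser/B\l ocki's inequality $|g_1|^2\lesssim g$ needed when $k=2$). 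This is a genuinely different entry point from the paper, which instead takes $S=\log\sigma_k$, so that the second--order $f$ term is $(\log f)_{1\bar1}$, bounded below only by $-C/f^{1/(k-1)}$, and then clears the singularity at the end by multiplying the whole maximum--principle inequality by $f^{1/(k-1)}$ together with the Maclaurin bound $\sum_p S^{p\bar p}=(n-k+1)\sigma_{k-1}/\sigma_k\ge c(n,k)\lambda_1^{1/(k-1)}/f^{1/(k-1)}$.

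The proposal as written does not, however, close, and the gap is exactly at the step you label ``the argument is now essentially standard.'' With $L$ the linearization of $\sigma_k$ (not of $\sigma_k^{1/k}$ or $\log\sigma_k$), the quantity $\mathcal S=\sum_iF^{i\bar i}=(n-k+1)\sigma_{k-1}(\lambda)$ has \emph{no uniform positive lower bound}: taking $\lambda\approx(M,\varepsilon,\dots,\varepsilon)$ with $M\varepsilon^{k-1}\approx f$ gives $\mathcal S\approx M^{1/(k-1)}f^{(k-2)/(k-1)}\to 0$ as $f\to 0$. All the good positive terms you have at $x_0$ — both $\phi'\sum_iF^{i\bar i}\lambda_i^2$ and $A\sum_iF^{i\bar i}$ — are weighted by $\mathcal S$ (or are even smaller), whereas the ``bounded'' right-hand-side contributions you produce are genuine $O(1)$ constants (e.g.\ $-Akf$, the gradient-of-$f$ term, $C/\lambda_1$). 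So the final inequality has the form $\mathcal S\big(\frac{\delta^2\lambda_1^2}{4nK}-C_0'K\big)\le C$, which gives no bound on $\lambda_1$ when $f$, hence $\mathcal S$, is small. One must rescale: dividing $L\Phi\le 0$ by $f^{(k-2)/(k-1)}$ both keeps your second-order $f$-term bounded (that is what Glaeser is really for) and restores, via Maclaurin, a lower bound $\mathcal S f^{-(k-2)/(k-1)}\ge c\lambda_1^{1/(k-1)}$ — but this is exactly the paper's $f^{1/(k-1)}$-multiplication in disguise, so it is a missing ingredient rather than a cosmetic omission. Separately, the choice $\psi(t)=-At$ is not admissible: the Hou--Ma--Wu index split absorbs $\sum_{p\in I}F^{p\bar p}|h_{1\bar 1,p}|^2/\lambda_1^2$ using the \emph{strict} convexity of $\psi$ (one needs $\psi''\ge\frac{2\delta}{1-\delta}(\psi')^2$ as in HMW's Proposition~2.3), which fails for a linear $\psi$; the paper uses $\psi(t)=-A\log(1+t/2L)$ precisely for this reason. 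Finally, using the full concavity bound as a single inequality discards the off-diagonal pieces $\sigma_{k-2}(\lambda|1p)|h_{1\bar 1,p}|^2$ that the index split needs; one has to apply concavity only to the diagonal block of $h_{\cdot\bar\cdot,1}$ and retain the off-diagonal sum separately, as the paper does.
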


Coupling this estimate with the main result from \cite{DK} one can prove that the solution $u$ has bounded Laplacian and thus  belongs to the {\it weak} $\mathcal C^{1,1}$ space.
The proof of the above estimate relies heavily on the argument of Hou-Ma-Wu \cite{HMW}. The main importance of our improvement is that the obtained exponent is {\it optimal} as an example constructed in the note shows.

\medskip

The complex Hessian equations were first considered in the case of domains in $\co^n$, where the equation takes the form
\vspace{-0.2cm}
\begin{equation}\label{local}
%\vspace{-0.2cm}
 (i\ddb u)^k\we\beta^{n-k}=f\beta^n
\end{equation}
with $\beta=\dc |z|^2$ denoting the standard Hermitian $(1,1)$ form in $\co^n$. The corresponding Dirichlet problem was studied by S.-Y. Li \cite{L} and Z. B\l ocki \cite{B2}. In particular, the nondegenerate Dirichlet problem in a {\it strictly k-pseudoconvex} domain admits a unique smooth solution provided $f$ and the boundary data are smooth and $f$ is uniformly positive. Again it is interesting to study $\mathcal C^{1,1}$
regularity in the case when $f$ vanishes or decreases to zero at the boundary. It has to be emphasized that the occurence of a boundary makes things substantially harder and the regularity theory is far from complete. When $k=n$, that is for the complex Monge-Amp\`ere case, some regularity results were obtained by Krylov \cite{Kry1, Kry2} under the assumption that $f\geq 0$ and $f^{1/k}\in \mathcal C^{1, 1}$.

\medskip

The complex Hessian equation is itself modelled on its {\it real} counterpart 
%\vspace{-0.2cm}
\[
%\vspace{-0.2cm}
S_k(D^2u)=f
\]
with $S_k(A)$ denoting the sum of all main $k\times k$-minors of the matrix $A$. The real Hessian equation is much better understood and we refer to \cite{W2} for an excellent survey regarding the
corresponding regularity theory. In particular in the real setting the following analogue of the Hou-Ma-Wu \cite{HMW} estimate was established by Ivochkina-Trudinger-Wang in \cite{ITW}.
\begin{theorem}[\cite{ITW}]Let $U\subset\mathbb R^n$ be a strictly $k$-convex domain with $ C^4$ boundary. Suppose that the admissible function $v$ 
satisfies the problem
\begin{equation}\label{real}
 \begin{cases}
 S_k(D^2v)=f\ & {\rm in}\ U\\
 v=\varphi\ & {\rm in}\ \pa U, 
\end{cases}
\end{equation}
where we assume that $\varphi\in \mathcal C^4(\pa U)$ and $f^{1/k}\in \mathcal C^2(\overline{U})$. Then $v\in \mathcal C^{1,1}(\overline{U})$ with $\mathcal C^2$
norm bounded by an estimable constant dependent on $f,\varphi, k, n$ and $U$.
\end{theorem}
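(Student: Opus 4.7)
\medskip
\noindent\textbf{Proof proposal.} The plan is to reduce $C^{1,1}$ regularity to four a priori estimates in the usual order ($C^0$, $C^1$, interior $C^2$, boundary $C^2$) and to adapt each so that it survives the degeneracy $f\geq 0$. The $C^0$ bound is standard: admissibility forces $\Delta v\geq 0$, so $v$ is bounded above by the harmonic extension of $\varphi$, while a lower barrier built from a defining function of the strictly $k$-convex domain $U$ and the boundedness of $f^{1/k}$ controls $v$ from below. For the $C^1$ estimate, one first bounds $|\nabla v|$ on $\partial U$ by upper/lower barriers agreeing with $\varphi$ on $\partial U$ (the upper one harmonic, the lower one built from the $k$-convex defining function), then propagates this to the interior via the maximum principle applied to an auxiliary function of the form $|\nabla v|^2 e^{A(v-\inf v)}$.

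The core of the argument is the interior $C^2$ estimate. Following the Hou--Ma--Wu/ITW scheme, test the function
\[
W(x) = \log \lambda_1(D^2v(x)) + \phi\bigl(|\nabla v|^2\bigr) + \psi(v),
\]
where $\lambda_1\geq\cdots\geq\lambda_n$ are the eigenvalues of $D^2v$ and $\phi,\psi$ are convex auxiliaries to be tuned. At an interior maximum $x_0$, rotate so $D^2v(x_0)$ is diagonal and combine the first- and second-order conditions $\partial_iW=0$, $F^{ij}\partial_{ij}W\leq 0$ (with $F^{ij}=\partial S_k/\partial v_{ij}$) with the identity obtained by differentiating $S_k(D^2v)=f$ twice along the eigendirection of $\lambda_1$. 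The key structural input is the concavity of $S_k^{1/k}$ on the admissible cone together with the Newton--Maclaurin bound $\sum_i F^{ii}\gtrsim \lambda_1^{-1}S_k$; these allow one to write every appearance of the data as $f^{1/k}$ and its first two derivatives, never as $\log f$ or $f^{-1}f_{ij}$. Absorbing with $\phi,\psi$ yields $\lambda_1(x_0)\leq C\bigl(1+\sup_U|\nabla v|^2+\sup_{\partial U}\lambda_1\bigr)$.

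The boundary $C^2$ estimate proceeds by the Caffarelli--Nirenberg--Spruck scheme. Tangential-tangential derivatives are read off from $\varphi\in C^4(\partial U)$ and the second fundamental form of $\partial U$. Mixed tangent-normal derivatives are controlled by a barrier of the form
\[
B = a_1 d - a_2 d^2 + a_3\!\!\sum_{\alpha<n}\!(v-\varphi)_\alpha^2,\qquad d=\mathrm{dist}(\cdot,\partial U),
\]
with the constants chosen so that $F^{ij}\partial_{ij}B\leq -\theta\sum_i F^{ii}$ on a boundary collar while $B$ dominates $(v-\varphi)_{\alpha n}$ there; the strict $k$-convexity of $\partial U$ provides the quantitative coercivity needed to run the barrier. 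The double normal $v_{\nu\nu}$ is then extracted algebraically from $S_k(D^2v)=f$ once the remaining entries of $D^2v$ are bounded, its leading coefficient being $S_{k-1}$ of the tangential Hessian, which is positive and bounded below by the strict $k$-convexity of $\partial U$ independently of $f$.

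The principal obstacle throughout is the vanishing of $f$. Classical nondegenerate proofs differentiate $\log f$ and implicitly divide by $f$, which is illegal here; the remedy, available precisely because $f^{1/k}\in C^2$, is to consistently substitute $k\log f^{1/k}$ for $\log f$, to rely on the concavity of $S_k^{1/k}$ (rather than of $\log S_k$), and to carry $f^{1/k}$ rather than $f$ through the barrier constructions. In the double-normal step the coefficient of $v_{\nu\nu}$ is bounded below independently of $f$ by strict $k$-convexity, so degeneracy is harmless there. Once all four estimates are in hand, the interior inequality $\lambda_1\leq C(1+\sup_{\partial U}\lambda_1)$ closes the loop and delivers $\|v\|_{C^{1,1}(\overline U)}\leq C(n,k,U,\varphi,\|f^{1/k}\|_{C^2})$.
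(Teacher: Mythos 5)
First, a point of reference: this theorem is not proved in the paper at all --- it is quoted from Ivochkina--Trudinger--Wang \cite{ITW} purely as background on the real case --- so your proposal can only be judged against the argument of \cite{ITW} (which rests on the Caffarelli--Nirenberg--Spruck/Trudinger machinery for Hessian boundary estimates), not against anything in this text. Judged that way, your skeleton is the expected one: $C^0$ and $C^1$ bounds by barriers, a global second-order bound reducing the interior maximum to the boundary, the tangential/mixed/double-normal trichotomy at $\partial U$, and the systematic replacement of $\log f$ by $f^{1/k}$ together with the concavity of $S_k^{1/k}$, which is indeed the correct way to survive the degeneracy.

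The genuine gap is the double normal estimate. You assert that the coefficient of $v_{\nu\nu}$ in $S_k(D^2v)=f$, namely $S_{k-1}$ of the tangential Hessian of $v$ on $\partial U$, ``is positive and bounded below by the strict $k$-convexity of $\partial U$ independently of $f$.'' That is not true as stated: since $v-\varphi$ vanishes on $\partial U$, the tangential Hessian at a boundary point is (up to sign conventions) $v_{\alpha\beta}=\varphi_{\alpha\beta}-(v-\varphi)_{\nu}\,\mathrm{II}_{\alpha\beta}$, so it depends on the boundary datum and on the normal derivative of $v$, not only on the geometry of $\partial U$; strict $k$-convexity constrains the principal curvatures of the boundary, not this matrix. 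Admissibility of $v$ only guarantees $S_{k-1}(v_{\alpha\beta})\geq 0$, and if this quantity degenerates you cannot ``extract $v_{\nu\nu}$ algebraically'' from the equation. Producing a uniform positive lower bound for it (or otherwise bounding $v_{\nu\nu}$) is precisely the hardest step of the boundary estimate; in \cite{ITW}, as in Trudinger's nondegenerate work it builds on, this requires a separate global argument on $\partial U$ (a barrier/contradiction argument at a boundary point extremizing a suitable tangential quantity), and in the degenerate setting it needs its own care. As written, your proposal asserts this point away, and the interior absorption computations (where the $\mathcal C^2$ bound on $f^{1/k}$ must compensate the vanishing of $f$) are likewise left at the level of a plan, so the proposal is an outline with the decisive step missing rather than a proof.
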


Again it is unknown whether the exponent $1/k$ is optimal here. It has attracted much attention to establish the above theorem with the exponent $1/k$ being replaced by $1/(k-1)$. More recently, the above theorem was proved under a weaker condition on $f$ in \cite{WX}, but the optimal one seems still missing. On the bright side the optimality problem has been settled in the extremal case $k=n$, i.e. when we deal with the real Monge-Amp\`ere equation. By a result of Guan-Trudinger-Wang \cite{GTW} the {\it optimal} exponent yielding $\mathcal C^{1,1}$ solutions is $1/(n-1)$ for domains in $\mathbb R^n$. Sharpness of this bound follows from an example of Wang \cite{W1}. This example has been generalized for the complex Monge-Amp\`ere equation by the second named author in \cite{P}.

\smallskip

In the case of general Hessian equations the current state of affairs is as follows: it was stated in \cite{ITW} that an example analogous to the one in \cite{W1} suggests that the exponent $1/(k-1)$ is optimal for the real $k$-Hessian equation. As no proof of this was provided we take the opportunity to present the relevant example (as well as its complex and compact manifold counterparts) in detail, since in our opinion the arguments used in the proof have to be slightly different than the approach of Wang \cite{W1}. In particular, we have

\begin{prop}\label{Plis} 
For every $\varepsilon>0$, there exists a non-negative function $f$ in the unit ball in $\mathbb R^n$ (respectively, $\mathbb P^{n-1}\times \mathbb P^1$ or the unit ball in\ $\co^n$) such that 
$f^{1/(k-1)+\varepsilon}\in \mathcal C^{1,1}$, but the solution to the k-Hessian equation with $f$ 
as a right hand side is not $\mathcal C^{1,1}$. 
\end{prop}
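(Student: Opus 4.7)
The plan is to produce explicit Pogorelov-type examples in each of the three settings, modelled on Wang's construction in \cite{W1}. In a small ball around the origin of $\mathbb{R}^n$ (which after rescaling may be taken to be the unit ball), we take
\[
u(x) = |x_1|^{\beta}\,(1 + x_2^2 + \cdots + x_k^2), \qquad 1 < \beta < 2,
\]
with $\beta$ close to $2$ depending on $\varepsilon$. Since $u$ is independent of $x_{k+1}, \ldots, x_n$, the Hessian $D^2 u$ has a zero block there, and a Schur-complement diagonalization of the upper-left $k\times k$ block $H_k$ yields one ``blowing'' eigenvalue of order $|x_1|^{\beta-2}$ together with $k-1$ ``small'' positive eigenvalues of order $|x_1|^{\beta}$, all positive near the origin. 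Hence $\sigma_j(D^2 u) = \sigma_j(H_k) \geq 0$ for $j \leq k$ and $u$ is $k$-admissible. Any $k$-subset of eigenvalues containing one of the $n - k$ vanishing ones contributes zero to $\sigma_k$, so $\sigma_k(D^2 u) = \det H_k$, and a direct Schur computation gives
\[
\det H_k = 2^{k-1}\beta\bigl[(\beta-1) - (\beta+1)(x_2^2 + \cdots + x_k^2)\bigr]\,|x_1|^{k\beta-2}.
\]
Thus $f := \sigma_k(D^2 u) \asymp |x_1|^{k\beta - 2}$ near $\{x_1 = 0\}$ (and $f \geq 0$ on a small enough ball), while $u_{11} \sim |x_1|^{\beta - 2} \notin L^\infty$ shows $u \notin \mathcal{C}^{1,1}$. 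Given $\varepsilon > 0$, picking $\beta < 2$ with $(k\beta - 2)(\tfrac{1}{k-1} + \varepsilon) \geq 2$ forces $f^{1/(k-1) + \varepsilon}$ to vanish to order at least $2$ along $\{x_1 = 0\}$ and hence to lie in $\mathcal{C}^{1,1}$.

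The complex ball case is essentially the same construction, with $|x_1|^\beta$ replaced by $|z_1|^\beta$ and each $x_j^2$ by $|z_j|^2$; the complex Hessian $u_{i\bar j}$ inherits the analogous block structure and spectral split, and the Schur computation even simplifies, since the cross terms coming from $P = 1 + |z'|^2$ cancel exactly. For the compact manifold $\mathbb{P}^{n-1}\times\mathbb{P}^1$ equipped with a product of Fubini--Study metrics $\omega=\omega_1+\omega_2$, a bidegree count shows that in the expansion of $(\omega+i\ddb u)^k\wedge\omega^{n-k}$ only the two pieces $(\omega_1+i\ddb u_1)^{k-1}\wedge\omega_1^{n-k}\wedge(\omega_2+i\ddb u_2)$ and $(\omega_1+i\ddb u_1)^k\wedge\omega_1^{n-k-1}\wedge\omega_2$ survive. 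Taking $u$ to depend only on the $\mathbb{P}^{n-1}$-factor reduces the equation to a combined $(k-1)$- and $k$-Hessian problem on $\mathbb{P}^{n-1}$; gluing the local complex Pogorelov model into a Fubini--Study chart, corrected by a large multiple of a smooth K\"ahler potential to maintain global $k$-admissibility, then yields an admissible function with the required non-$\mathcal{C}^{1,1}$ singularity and a right-hand side satisfying the compatibility condition.

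The main obstacle is to certify that the explicit non-$\mathcal{C}^{1,1}$ function above is really \emph{the} admissible solution of the $k$-Hessian equation with right-hand side $f$, rather than merely a subsolution carrying the correct singularity. We handle this by perturbation: for small $\eta > 0$, we replace $f$ by $f + \eta$ (suitably renormalized in the compact case), obtaining a strictly positive right-hand side whose equation admits a unique smooth admissible solution $u_\eta$ by the nondegenerate theory of Hou--Ma--Wu~\cite{HMW} and Dinew--Ko\l odziej~\cite{DK}. Stability of the problem under $L^1$-convergence of the right-hand side, combined with the comparison principle for admissible solutions, then identifies $\lim_{\eta\to 0^+} u_\eta$ as our explicit $u$, and the failure of $\mathcal{C}^{1,1}$ persists through the limit.
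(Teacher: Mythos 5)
Your approach in the ball settings is genuinely different from the paper's and, at its core, viable: rather than prescribing a radial right-hand side $F=\eta(|x_n|/|x'|^a)|x'|^b$ and then deducing from a monotonicity lemma that the \emph{unknown} solution fails to be $\mathcal C^{1,1}$, you write down a Pogorelov-type explicit $u=|x_1|^\beta(1+x_2^2+\cdots+x_k^2)$ (padded in the extra variables so that $\sigma_k(D^2u)=\det H_k$), compute $f$ directly, and read off both the non-$\mathcal C^{1,1}$ behaviour of $u$ and the $\mathcal C^{1,1}$-regularity of $f^{1/(k-1)+\varepsilon}$. The determinant formula and the admissibility claim are correct (in fact $u$ is convex, so a fortiori $k$-admissible), and the complex-ball version simplifies further as you note. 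Two small remarks on the domain cases: you must restrict to a ball where $(\beta-1)-(\beta+1)\sum x_j^2>0$ so that $f\ge 0$, and you should explicitly invoke that, since your $u$ is $\mathcal C^1$ and convex with $\sigma_k(D^2u)\in L^1_{\rm loc}$, the weak $k$-Hessian measure of $u$ has no singular part and hence agrees with the pointwise density $f$; granted this, $u$ is \emph{the} unique solution of the Dirichlet problem with its own boundary data by the comparison principle, so the perturbation/limiting paragraph at the end is superfluous here. The paper, by contrast, insists on zero boundary data, which their barrier-and-monotonicity method is tailored for; your example carries nonzero boundary data, which is fine for the statement as written but is a structural departure.

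The genuine gap is the compact manifold case. Your bidegree observation is correct as far as it goes: if $u$ depends only on the $\mathbb P^{n-1}$ factor, then $(\omega+i\ddb u)^k\wedge\omega^{n-k}$ reduces to a linear combination of $\alpha^{k}\wedge\omega_1^{n-k-1}\wedge\omega_2$ and $\alpha^{k-1}\wedge\omega_1^{n-k}\wedge\omega_2$ with $\alpha=\omega_1+i\ddb u$. But this produces a \emph{mixed} $(\sigma_{k-1}+\sigma_k)$-type equation on $\mathbb P^{n-1}$, and there is no explicit Pogorelov-type potential solving such a mixed equation with the required vanishing rate of $f$. The suggestion to ``glue the local complex Pogorelov model into a Fubini--Study chart, corrected by a large multiple of a smooth K\"ahler potential'' is not an argument: adding a smooth potential alters $\alpha$ and hence changes $f$ everywhere, potentially destroying the carefully tuned vanishing order of $f$ at the singularity; nothing is said about how to maintain global $\omega$-$k$-admissibility, how to satisfy the compatibility condition $\int_X f\,\omega^n=\int_X\omega^n$, or why the glued function would be the normalized solution. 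The paper handles this case by a completely different device: it prescribes a globally defined radial $f$ that depends on \emph{both} factors, establishes a monotonicity lemma (Lemma~\ref{monotonicznosc}) using rotations of $\mathbb P^1$ that preserve the product metric, and then runs a barrier argument as in the real case. Your proposal contains no substitute for this step, so the compact manifold assertion is not proved.
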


The examples living on  $\mathbb P^{n-1}\times\mathbb P^1$ equipped with the Fubini-Study product metric yield in particular a regularity threshold $1/(k-1)$ for the exponent of $f$. This shows that our main result (Theorem 1.1) is {\it optimal}. We also take the opportunity to investigate the regularity
of the example given in Proposition \ref{Plis} under various weaker assumptions on the right hand side (see Example \ref{general-example} in \S 4.2). More precisely, we provide some examples to indicate what might be the best possible regularity of the admissible solutions for equation
\[
(\omega+ i\ddb u)^k\wedge\omega^{n-k} = f(z)\, \omega^n,
\] 
on a compact K\"ahler manifold $(X, \omega)$ with $0 \leq f\in L^{p}$ (or $\mathcal C^{0, \delta}$) satisfying $\int_X f\, \omega^n = \int_X \omega$. 
We believe that at least in some cases the obtained examples are sharp.

\bigskip

{\bf Acknowledgements}. This project was initiated when the first named author was visiting University of California at Irvine in the summer of 2016. He  wishes to thank the Department of Mathematics for the warm hospitality.

\

\section{Preliminaries}

\par
Below we gather the definitions and facts that will be used in the proofs later on. We refer to the survey article \cite{W2} for the basics of the theory of Hessian equations. We start with some relevant notions from linear algebra. Consider the set $\mathcal M_n(\mathbb R)$ (respectively: $\mathcal M_n(\mathbb C)$) of all symmetric (respectively Hermitian symmetric) $n\times n$ matrices. Let $ \la(M) = (\la _1, \la _2 , ... ,
\la _n)$ be the eigenvalues of a matrix $M$ arranged in decreasing order and
let
\[
S_k(M)=S_k(\la(M))=\sum _{0<j_1 < ... < j_m \leq n }\la _{j_1}\la _{j_2} 
... \la _{j_m}
\]
be the $k$-th elementary symmetric polynomial applied to the vector $\la(M)$. Analogously we define $\sigma_k(M)$ if $M$ is Hermitian. Then one can define the positive cones $\Gamma_m$ as follows
\begin{equation}\label{ga}
\Gamma_m=\lbrace \la\in\mathbb R^n|\ S_1(\la)> 0,\ \cdots,\
S_m(\la)> 0\rbrace.
\end{equation}
Note that the definition of $\Gamma_m$ is nonlinear if $m>1$.

%But the vectors in $\Gamma_m$, and hence the set of
%matrices with corresponding eigenvalues enjoy several convexity
%properties resembling the properties of positive definite
%matrices, and
%in partiular the convexity of $\Gamma_m$.
\smallskip

Let now $V=(v_{\bar{k} j})$ be a fixed positive definite Hermitian matrix and
$\la_i(T)$ be the eigenvalues of a Hermitian matrix $T=(\tau_{\bar{k}j})$ with
respect to $V$. We can define analogously $\sigma_{k,V}(T)$. In the language of differential forms if 
$\tau=i \,\tau_{\bar{k}j}dz^j\we d\bar{z}^k$, $v=i\,v_{\bar{k}j}dz^j\we d\bar{z}^k$
then $\sigma_{k,V}(T)$ is (up to a multiplicative universal constant) equal to the coefficient of the top-degree form $\tau^{k}\we v^{n-k}$. We can also analogously define the sets $\Gamma_k(V)$. Below we list the properties of these cones that will be used later on:
\begin{enumerate}
\item (Maclaurin's inequality I) If $\la\in\Gamma_m$ then
$\left(\frac{S_j}{\binom nj}\right)^{\frac 1j}\geq \left(\frac{S_i}{\binom ni}\right)^{\frac 1i}$
for $1\leq j\leq i\leq m$. The same inequality holds for the operators $\sigma_k$;
\item (Maclaurin's inequality II) There is a universal constant $c(n,m)$, dependent only on $n$ and $m$, such that
$\sigma_{m-1}(\la)\geq c(n,m)\sigma_m(\la)^{\frac{m-2}{m-1}}\sigma_1(\la)^{\frac1{m-1}}$ for any $\la\in \Gamma_m$;
\item $\Gamma_m$ is a convex cone for
any $ m$ and the function $\sigma_m^{\frac 1m}$ as well as $log(\sigma_m)$ are concave when restricted to
$\Gamma_m$;
\item (G\aa rding's inequality) Let $\sigma_{k}(\la|i):=
\frac{\partial \sigma_{k+1}}{\partial \la_i}(\la)$. Then for any
$\la,\ \mu\in\Gamma_m$
$$\sum_{i=1}^n\mu_i\sigma_{m-1}(\la|i)\geq m\sigma_m(\mu)^{\frac1m}\sigma_m(\la)^{\frac{m-1}{m}}.$$
\item $\sigma_{m-1}(\la|ij)=\frac{\sigma_m(\la|i)-\sigma_m(\la|j)}{\la_j-\la_i}$ for all $i\neq j$.
\end{enumerate}

We refer to \cite{W2} for further properties of these cones.

\medskip

Recall that a smooth function $v$ living on a domain $U\subset \mathbb R^n$ is called $k$-convex for some natural $1\leq k\leq n$ if 
$$S_j(D^2v(x))\geq0, j=1,\cdots, k$$
with $D^2v(x)$ denoting the  Hessian matrix of $v$ at $x$ and $S_j(A)$ is the sum of all main $j\times j$ minors of the $n\times n$ matrix $A$. Analogously a function $u$ living on a domain $\Omega\subset \mathbb C^n$ is called $k$-subharmonic for some natural $1\leq k\leq n$ if 
$$\sigma_j(i\partial\bar\partial u(z))\geq0, j=1,\cdots, k$$
with $\sigma_j(B)$ denoting again the sum of the main $j\times j$ minors of a Hermitian symmetric matrix $B$. In the complex setting one can alternatively use the laguage of differential forms to define the $\sigma_k$ operator as
 % and up to a mutiplicative constant and identification of the canonical top-degree form with the Lebesgue mesure $d\lambda_{2n}$ we have
\[
\sigma_k(i\partial\bar \partial u) \beta^n=\binom{n}{k}(i\ddb u)^k\wedge\beta^{n-k}
\]
with $\beta:=\dc |z|^2$ denoting the standard Hermitian $(1,1)$-form in $\mathbb C^n$.

\smallskip
These are the local real and complex versions of the functions belonging to $\ksh(X,\om)$ defined in the introduction. 
In each of these settings one can define {\it singular} $k$-convex (respectively $k$-subharmonic) functions locally 
as decreasing limits of smooth ones. The basic fact from the associated nonlinear potential theories (see \cite{W2}
for the real case and \cite{B2, DK1} for the complex one) is that the operators $S_k$ (respectively $\sigma_k$) 
can still be properly defined as nonnegative measures for singular bounded $k$-convex ($k$-subharmonic) functions.

The following theorem, known as the comparison principle is basic in the potential theory of $k$-subharmonic functions (see \cite{DK1}). We remark that an analogous result is also true for $k$-convex functions (\cite{W2}).

\begin{theorem}[\cite{W2}]\label{compprin}
If $u,\ w$ are two bounded $k$-subharmonic functions in a domain $\Om\subset\mathbb C^n$, such that $\liminf_{z\rightarrow\partial\Om}(u-w)(z)\geq 0$. If moreover
$$\sigma_k(i\partial\bar \partial w)\geq\ \sigma_k(i\partial\bar \partial  u)$$
as measures then $u\geq w$ in $\Om$. 
\end{theorem}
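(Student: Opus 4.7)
The plan is to argue by contradiction via an integration-by-parts scheme adapted from the Bedford--Taylor pluripotential theory, as worked out for the $k$-subharmonic setting by B\l ocki. Assume for contradiction that $\{u<w\}$ is nonempty. Then for every sufficiently small $\varepsilon > 0$ the set $E := \{u < w - \varepsilon\}$ is still nonempty, and by the liminf boundary hypothesis it is relatively compact in $\Omega$.

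The next step is to establish the reverse integral inequality
\begin{equation*}
\int_E \sigma_k(i\ddb w)\,\beta^n \;\leq\; \int_E \sigma_k(i\ddb u)\,\beta^n.
\end{equation*}
For smooth $u, w$ this follows from the telescoping identity
\begin{equation*}
\bigl[(i\ddb w)^k - (i\ddb u)^k\bigr]\wedge \beta^{n-k} \;=\; i\ddb(w-u)\wedge T,
\end{equation*}
where $T := \sum_{j=0}^{k-1}(i\ddb w)^{j}\wedge (i\ddb u)^{k-1-j}\wedge \beta^{n-k}$. The key point is that $T$ is a closed positive $(n-1,n-1)$-current: closedness is immediate, while positivity uses the fact that, for $k$-subharmonic functions, any wedge product of at most $k-1$ of their Hessian forms with sufficiently many copies of $\beta$ remains positive, a consequence of the G\aa rding-type inequalities listed in Section 2. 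Applying Stokes' theorem over $E$, together with $w - u - \varepsilon = 0$ on $\partial E$ and $w - u - \varepsilon > 0$ inside $E$ (so that $d(w-u)$ points inward on $\partial E$), yields the desired inequality after the standard sign analysis of the resulting boundary term.

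Combining this with the hypothesis $\sigma_k(i\ddb w)\geq \sigma_k(i\ddb u)$ only gives equality of the two integrals, not yet a contradiction. To upgrade, I would perturb: pick $R$ with $\overline{\Omega}\subset B(0,R)$ and replace $w$ by $w_\delta := w + \delta(|z|^2 - R^2)$ for small $\delta > 0$. Then $w_\delta$ is still $k$-subharmonic, the liminf boundary condition is strengthened rather than weakened (since $|z|^2 - R^2 \leq 0$ on $\overline\Omega$), and direct expansion gives $\sigma_k(i\ddb w_\delta)\geq \sigma_k(i\ddb w) + c\,\delta^k$ pointwise for a constant $c = c(n,k) > 0$. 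Applying the previous step to $w_\delta$ and $u$ on the corresponding relatively compact set $E_\delta := \{u < w_\delta - \varepsilon\}$ (which remains nonempty for $\delta, \varepsilon$ small, since $\{u < w\}$ is open in any interior sense required) forces this set to have zero Lebesgue measure, the desired contradiction.

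The main obstacle is the reduction to the smooth case, since $u$ and $w$ are only assumed bounded. Each of $\sigma_k(i\ddb u)$, $\sigma_k(i\ddb w)$ and the auxiliary current $T$ must first be defined as a Radon measure, respectively a closed positive current, and the integration by parts must be justified in this singular setting. This is precisely the content of the nonlinear pluripotential theory for $k$-subharmonic functions developed in \cite{B2, DK1}, and rests on Chern--Levine--Nirenberg type inequalities together with the weak continuity of the Hessian operator along decreasing sequences. My plan would be to invoke these results and then apply the smooth argument above to monotone smooth approximations $u_\ell \downarrow u$, $w_\ell \downarrow w$, passing to the limit in the final inequality.
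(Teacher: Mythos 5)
The paper does not prove Theorem~\ref{compprin}: it is cited as a known result, with \cite{DK1} for the complex Hessian case and \cite{W2} for the real analogue, so there is no internal proof to compare yours against. Your sketch reproduces the standard Bedford--Taylor-style pluripotential argument used in those references and is correct in outline: the telescoping factorization
$\bigl[(i\ddb w)^k-(i\ddb u)^k\bigr]\wedge\beta^{n-k}=i\ddb(w-u)\wedge T$ with $T$ a closed positive current by G\aa rding-type positivity of polarized $\sigma_k$, the integration by parts over the sublevel set $E$ (the sign falling out because $w-u-\varepsilon$ vanishes on $\partial E$ and the resulting interior term $\chi'(v)\,i\partial v\wedge\bar\partial v\wedge T$ is nonnegative), and the $\delta(|z|^2-R^2)$ perturbation to turn equality of integrals into a strict contradiction. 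The technically heavy step --- defining the mixed currents, making Stokes rigorous for merely bounded $k$-subharmonic $u,w$, and passing the inequality through a decreasing smooth approximation --- is real work, but you correctly identify it and defer to \cite{B2,DK1}.

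One genuine gap remains at the very end, and your parenthetical remark that ``$\{u<w\}$ is open in any interior sense required'' papers over it rather than closing it. Neither $\{u<w\}$ nor $E_\delta=\{u<w_\delta-\varepsilon\}$ need be open, since both $u$ and $w_\delta$ are only upper semicontinuous; a nonempty, non-open set of Lebesgue measure zero is therefore not immediately a contradiction. What saves the argument is the sub-mean-value inequality that any $k$-subharmonic function inherits from being $1$-subharmonic: if $u\geq w_\delta-\varepsilon$ almost everywhere, then for any $z_0$ one has
\[
u(z_0)=\lim_{r\to 0}\,\fint_{B_r(z_0)}u\;\geq\;\liminf_{r\to 0}\,\fint_{B_r(z_0)}(w_\delta-\varepsilon)\;\geq\;w_\delta(z_0)-\varepsilon,
\]
using upper semicontinuity together with the sub-mean-value property of $u$ on the left and the sub-mean-value property of $w_\delta$ on the right. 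Hence $|E_\delta|=0$ does force $E_\delta=\emptyset$, but this observation must be spelled out; as written, your final ``desired contradiction'' does not follow. (Here $\fint$ denotes the average integral; in the paper's notation one may write $\frac{1}{|B_r|}\int_{B_r}$.)
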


As a corollary one immediately obtains the uniqueness of bounded solutions for the corresponding Dirichlet problems.
The uniqueness of normalized bounded solutions from $\ksh(X,\om)$ is also true (see \cite{DC}). The corresponding comparison 
principle (see \cite{DK}) reads as follows:
\begin{theorem}\label{compprinkahler}
Let $\varphi,\ \psi\in \ksh(X,\om)$ be bounded. Then
\[
\int_{\{ \varphi<\psi\}}(\om+i\ddb\psi)^k\wedge\om^{n-k}\leq\int_{\{ \varphi<\psi\}}(\om+i\ddb\varphi)^k\wedge\om^{n-k}.
\]
\end{theorem}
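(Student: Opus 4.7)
The plan is to prove this by a standard ``balayage'' / mass redistribution argument, modeled on the classical proof of the comparison principle for the complex Monge-Amp\`ere operator on compact K\"ahler manifolds. The decisive fact I will exploit is that for any bounded $v\in\ksh(X,\om)$, the total mass
\[
\int_X(\om+i\ddb v)^k\wedge\om^{n-k}=\int_X\om^n
\]
is a cohomological invariant, hence independent of $v$. (For smooth $v$ this follows from Stokes' theorem since $i\ddb v$ is exact; the bounded case then follows from the construction of the Hessian operator for bounded $\omega$-$k$-sh functions as a well-defined positive Borel measure, together with standard continuity under decreasing smooth approximations.)

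Fix $\varepsilon>0$ and consider the envelope $v_\varepsilon:=\max(\varphi,\psi-\varepsilon)\in\ksh(X,\om)$ (the maximum of two $\omega$-$k$-sh functions is again $\omega$-$k$-sh). The point is that on the open set $U^+_\varepsilon:=\{\varphi>\psi-\varepsilon\}$ one has $v_\varepsilon=\varphi$ locally, and on the open set $U^-_\varepsilon:=\{\varphi<\psi-\varepsilon\}$ one has $v_\varepsilon=\psi-\varepsilon$ locally; since the Hessian measure is defined locally, this gives
\[
(\om+i\ddb v_\varepsilon)^k\wedge\om^{n-k}=(\om+i\ddb\varphi)^k\wedge\om^{n-k}\text{ on }U^+_\varepsilon,\quad=(\om+i\ddb\psi)^k\wedge\om^{n-k}\text{ on }U^-_\varepsilon.
\]
Applying the mass identity to both $v_\varepsilon$ and $\varphi$, subtracting the common contribution from $U^+_\varepsilon$, and discarding the nonnegative measure of $v_\varepsilon$ on the closed set $X\setminus(U^+_\varepsilon\cup U^-_\varepsilon)=\{\varphi=\psi-\varepsilon\}$, I obtain
\[
\int_{U^-_\varepsilon}(\om+i\ddb\psi)^k\wedge\om^{n-k}\leq\int_{\{\varphi\leq\psi-\varepsilon\}}(\om+i\ddb\varphi)^k\wedge\om^{n-k}.
\]

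Finally, letting $\varepsilon\downarrow 0$, the sets $U^-_\varepsilon=\{\varphi<\psi-\varepsilon\}$ and $\{\varphi\leq\psi-\varepsilon\}$ both increase to $\{\varphi<\psi\}$, so monotone convergence of Borel measures yields the desired inequality.

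The main obstacle, as usual with the pluripotential-theoretic setup, is the \emph{well-definedness and local character} of the nonlinear operator $(\omega+i\ddb\cdot)^k\wedge\omega^{n-k}$ on bounded (not necessarily smooth) members of $\ksh(X,\om)$, and the cohomological mass equality above. These are precisely the ingredients developed for the complex Hessian setting (references \cite{B2, DK, DK1, DC} cited in the paper), and once these are granted the argument reduces to the elementary set-theoretic and monotone-convergence manipulations sketched above. The manifold setting adds only the pleasant feature that there are no boundary terms to worry about, which is what makes the mass equality trivial in the smooth case.
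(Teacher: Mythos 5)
Your overall architecture is the standard one (total mass invariance plus the envelope $\max(\varphi,\psi-\varepsilon)$ and a limit in $\varepsilon$), and it is in essence the route taken in the literature; note that the paper itself does not prove Theorem \ref{compprinkahler} at all but quotes it from \cite{DK}, so the comparison is with that standard argument. However, as written your proof has a genuine gap at the decisive step. The sets $U^{+}_\varepsilon=\{\varphi>\psi-\varepsilon\}$ and $U^{-}_\varepsilon=\{\varphi<\psi-\varepsilon\}$ are \emph{not open} in general: bounded functions in $\ksh(X,\om)$ are only upper semicontinuous, and a difference of two usc functions is neither usc nor lsc, so neither sublevel set is open. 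Consequently you cannot say that $v_\varepsilon$ coincides with $\varphi$ (resp.\ $\psi-\varepsilon$) ``locally'' on these sets, and the Euclidean-topology locality of the Hessian operator, which is what you invoke to identify the measures there, simply does not apply. If $\varphi$ and $\psi$ were continuous your argument would be complete, but the theorem is stated for bounded functions, and this is exactly where the nontrivial pluripotential-theoretic content lies.

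The standard repair, which is the actual substance of the proof in the sources \cite{B2,DK1,DK,C}, is to replace your locality claim by a Demailly-type maximum inequality: for bounded $u,v\in\ksh(X,\om)$ one has, as measures,
\begin{equation*}
(\om+i\ddb\max(u,v))^k\wedge\om^{n-k}\;\geq\;\mathbf{1}_{\{u>v\}}(\om+i\ddb u)^k\wedge\om^{n-k}+\mathbf{1}_{\{u\leq v\}}(\om+i\ddb v)^k\wedge\om^{n-k},
\end{equation*}
whose proof requires either locality of the Hessian operator with respect to the appropriate fine topology or an approximation by decreasing sequences of smooth admissible functions combined with quasi-continuity with respect to the $k$-Hessian capacity and the associated convergence theorems. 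Once this inequality is available, your bookkeeping goes through verbatim: integrating over $X$, using the mass identity for $v_\varepsilon=\max(\varphi,\psi-\varepsilon)$ and for $\varphi$, and letting $\varepsilon\downarrow 0$ with monotone convergence yields the claim. So the plan is sound, but you should either cite this maximum inequality (or the fine-topology locality) explicitly, or supply its proof; asserting openness of $U^{\pm}_\varepsilon$ is incorrect and hides the only genuinely delicate point of the theorem.
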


Finally we shall need an elementary calculus lemma whose proof can be found in \cite{B1}:
\begin{lemma} If $\psi\in \mathcal C^{1,1}(\overline{\Omega})$ is a nonnegative function. Then $\sqrt{\psi}$ is 
locally Lipschitz  in $\Omega$. For almost every $x\in\Omega$ we have
\[
\left|D\sqrt{\psi}(x)\right|\leq \max\left\{\frac{|D\psi(x)|}{2dist(x,\pa\Omega)}, \frac{1+\sup_{\Om}\la_{\max}[D^2\psi]}{2}\right\},
\]
where $\la_{\max}[D^2\psi]$ denotes the maximum eigenvalue of the real Hessian of $\psi$.
\end{lemma}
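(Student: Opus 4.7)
The plan is to reduce everything to a one-dimensional Taylor expansion of $\psi$ along the direction of steepest descent and then exploit nonnegativity of $\psi$. Wherever $\psi>0$ the function $\sqrt{\psi}$ is classically smooth with $D\sqrt{\psi}=D\psi/(2\sqrt{\psi})$, so the asserted pointwise bound on $|D\sqrt{\psi}|$ amounts to controlling $|D\psi(x)|/(2\sqrt{\psi(x)})$ at almost every such point; the exceptional set where $D^2\psi$ fails to exist has measure zero, by Rademacher applied to the Lipschitz map $D\psi$. On the interior of $\{\psi=0\}$ one has $\sqrt{\psi}\equiv 0$ trivially, and a uniform pointwise gradient bound on compact subsets of $\Omega$ integrates along segments to give the claimed local Lipschitz regularity.

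Fix a point $x\in\Omega$ at which $D^2\psi(x)$ exists and $\psi(x)>0$, and set $a:=|D\psi(x)|$, $b:=\psi(x)>0$, $d:=\mathrm{dist}(x,\partial\Omega)$, $v:=D\psi(x)/a$, and $M:=\sup_\Omega\lambda_{\max}[D^2\psi]$. Consider $\phi(t):=\psi(x-tv)$ on $[0,d]$: one has $\phi(0)=b$, $\phi'(0)=-a$, and $\phi''(t)=\langle D^2\psi(x-tv)\,v,v\rangle\leq M$ for a.e.\ $t$. Since $\phi'$ is absolutely continuous (coming from $D\psi$ being Lipschitz), integrating twice produces the Taylor estimate
\[
\phi(t)\leq b-ta+\tfrac{1}{2}Mt^2, \qquad t\in[0,d].
\]
Combining with $\phi(t)\geq 0$ yields a constraint on $a$ in terms of $b$, $M$, and the admissible range of $t$.

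The split in the statement matches the dichotomy $\sqrt{b}\gtrless d$. If $\sqrt{b}\geq d$, then $a/(2\sqrt{b})\leq a/(2d)$ is immediate. If $\sqrt{b}<d$, one may substitute $t=\sqrt{b}$ into the Taylor bound to get $0\leq b-\sqrt{b}\,a+Mb/2$, that is $a\leq \sqrt{b}(1+M/2)$, hence
\[
\frac{a}{2\sqrt{b}}\leq \frac{2+M}{4}\leq \frac{1+M}{2},
\]
where the last step uses $M\geq 0$. Nonnegativity of $M$ is automatic in the intended application — with $\psi=f^{1/(k-1)}$ vanishing somewhere, the Hessian at any interior zero of $\psi$ is positive semidefinite — and in full $\mathcal C^{1,1}$ generality one may harmlessly replace $M$ by $\max(0,M)$. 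The only technical nuance is justifying the integrated Taylor inequality at $\mathcal C^{1,1}$ (rather than $\mathcal C^2$) regularity, which is a routine absolute-continuity argument; otherwise the proof is essentially a one-line calculation in a well-chosen direction.
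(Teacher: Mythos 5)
Your argument follows essentially the same route as B\l ocki's proof in \cite{B1}, which is the paper's source for this lemma: expand $\psi$ to second order along the steepest-descent ray, exploit nonnegativity of $\psi$ to obtain a one-variable quadratic constraint, and split according to whether $\sqrt{\psi(x)}$ exceeds $\mathrm{dist}(x,\partial\Omega)$. The choice $t=\sqrt{\psi(x)}$ in the second case is exactly the right move, the absolute-continuity justification of the Taylor bound at $\mathcal C^{1,1}$ regularity is correct, and the approximation $\psi\rightsquigarrow\psi+\varepsilon$ is the clean way to upgrade the a.e.\ gradient bound to local Lipschitz continuity including near $\partial\{\psi=0\}$.

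One point you raise deserves more weight than you give it. Your case 2 yields $\tfrac{a}{2\sqrt b}\leq\tfrac{2+M}{4}$, and passing to $\tfrac{1+M}{2}$ requires $M:=\sup_\Omega\lambda_{\max}[D^2\psi]\geq 0$, which is genuinely \emph{not} automatic for nonnegative $\mathcal C^{1,1}$ functions on a bounded domain. In fact the lemma as literally written is false for $M<0$: take $\psi(x)=c(1-x^2)$ on $\Omega=(-1,1)$ with $c=0.3$. At $x=0.4$ one computes $|D\sqrt\psi|=\tfrac{0.24}{2\sqrt{0.252}}\approx 0.239$, while $\tfrac{|D\psi|}{2\,\mathrm{dist}(x,\partial\Omega)}=\tfrac{0.24}{1.2}=0.2$ and $\tfrac{1+\sup\lambda_{\max}[D^2\psi]}{2}=\tfrac{1-0.6}{2}=0.2$, so the right-hand side is $0.2<0.239$, and by continuity the failure persists on a set of positive measure. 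The correct statement, and the one your argument actually proves, has $\max\bigl(0,\sup_\Omega\lambda_{\max}[D^2\psi]\bigr)$ (equivalently one could write $\sup_\Omega\|D^2\psi\|$) in place of $\sup_\Omega\lambda_{\max}[D^2\psi]$. This makes no difference in Corollary~\ref{blocki}, where the constant is allowed to depend on the full $\mathcal C^{1,1}$ norm of $\psi$, so nothing downstream is affected; but you should state the corrected bound rather than wave at the fix, since as written the inequality you are asked to prove is not true.
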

Working in charts on a compact K\"ahler manifold one easily gets the following corollary of the lemma above:
\begin{corollary}\label{blocki}
Let $f\geq 0$ be a function on a compact K\"ahler manifold $(X,\om)$ such that $f^{1/(k-1)}\in \mathcal C^{1,1}(X)$. Then
\[
\left\|\nabla f^{1/(k-1)}(z)\right\|^2\leq C\left\|f^{1/(k-1)}(z)\right\|
\]
for some constant $C$ dependent on $X,\om$ and the $\mathcal C^{1,1}$ norm of $f^{1/(k-1)}$. In particular for any unitary vector $\eta$ one has
\[
\partial_{\eta}\partial_{\bar\eta}\log f= (k-1) \left({\partial_{\eta}\partial_{\bar\eta} f^{1/(k-1)} \over f^{1/(k-1)}} - { |\partial_{\eta} f^{1/(k-1)}|^2 \over f^{2/(k-1)}}\right) \geq - {\tilde{C} \over f^{1/(k-1)}} 
\]
for some constant $\tilde{C}$ dependent on $X,\om$ and the $\mathcal C^{1,1}$ norm of $f^{1/(k-1)}$.
\end{corollary}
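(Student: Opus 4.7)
The plan is to set $\psi := f^{1/(k-1)}$ and reduce everything to Lemma~2.4 by working in a finite atlas of coordinate charts, since both $\psi$ and the gradient/$\ddb$ estimates are local in nature.

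First, by compactness of $X$, I would fix a finite cover of $X$ by coordinate charts $(U_\alpha,\varphi_\alpha)$ together with smaller open subsets $U_\alpha'\Subset U_\alpha$ whose images $\Omega_\alpha':=\varphi_\alpha(U_\alpha')$ still cover $X$ and satisfy $\mathrm{dist}(\overline{\Omega_\alpha'},\partial\Omega_\alpha)\geq\delta$ for a uniform $\delta>0$. On each Euclidean domain $\Omega_\alpha$, the hypothesis $\psi\in\mathcal C^{1,1}(X)$ translates into a uniform bound on the Euclidean $\mathcal C^{1,1}$ norm of $\psi\circ\varphi_\alpha^{-1}$, since the K\"ahler metric $\om$ is comparable to the Euclidean metric on each chart.

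Second, I would apply Lemma~2.4 to $\psi\circ\varphi_\alpha^{-1}$ on each $\Omega_\alpha$. The lemma's upper bound on $|D\sqrt{\psi}|$ depends only on $\sup_{\Omega_\alpha}\lambda_{\max}[D^2\psi]$ and $1/\mathrm{dist}(\cdot,\partial\Omega_\alpha)$, both of which are under control on $\overline{\Omega_\alpha'}$ by the previous step. Hence there is a constant $K$ such that $|D\sqrt{\psi}|\leq K$ a.e.\ on $\Omega_\alpha'$; at points where $\psi>0$ this gives $|D\psi|^2=4\psi|D\sqrt{\psi}|^2\leq 4K^2\psi$, and at points where $\psi(z)=0$ the function $\psi$ attains its minimum so $D\psi(z)=0$ and the inequality is trivial. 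Passing back to the manifold and switching from the Euclidean gradient to the K\"ahler gradient (which only changes $K$ by the comparability constants of $\om$) yields
\begin{equation*}
\|\nabla f^{1/(k-1)}(z)\|^2\leq C\,f^{1/(k-1)}(z)\qquad\text{on }X.
\end{equation*}

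Third, the identity for $\pa_\eta\pa_{\bar\eta}\log f$ is a direct chain-rule computation from $\log f=(k-1)\log\psi$: one applies $\pa_{\bar\eta}$ to $\pa_\eta\log\psi=\pa_\eta\psi/\psi$. Note that $\pa_\eta\pa_{\bar\eta}\psi$ exists only a.e., which is still enough for a distributional lower bound on $\pa_\eta\pa_{\bar\eta}\log f$. At points where $\psi>0$, the $\mathcal C^{1,1}$ bound on $\psi$ gives $\pa_\eta\pa_{\bar\eta}\psi\geq -C_1$ for every unitary $\eta$, whence $\pa_\eta\pa_{\bar\eta}\psi/\psi\geq -C_1/\psi$, while the gradient estimate from the previous step yields $|\pa_\eta\psi|^2/\psi^2\leq C_2/\psi$. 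Adding these two estimates and multiplying by $(k-1)$ gives the claimed bound $\pa_\eta\pa_{\bar\eta}\log f\geq -\widetilde C/f^{1/(k-1)}$.

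There is no real obstacle; the only point requiring some care is the passage from the Euclidean statement of Lemma~2.4 to the manifold, which is handled by the compactness-based choice of atlas, and the distinction between the pointwise identity and its interpretation at zeros of $\psi$, where both sides of the final inequality degenerate consistently.
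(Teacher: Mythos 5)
Your proposal is correct and follows essentially the same route as the paper: pick a coordinate chart around each point (uniform radius via compactness), apply Lemma~2.4 to $\psi=f^{1/(k-1)}$ to get the gradient bound, and deduce the lower bound on $\partial_\eta\partial_{\bar\eta}\log f$ by the chain rule. The paper's proof is in fact terser than yours (it only spells out the chart-plus-lemma step for the first inequality and leaves the chain-rule computation implicit), so your version merely fills in the same details, including the correct handling of zeros of $\psi$ and the conversion between Euclidean and $\om$-norms.
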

\begin{proof}
 Pick a point $z\in X$ and a chart centered around $z$ containing a ball of some fixed radius $r$ (dependent only on $X$ and $\om$). Then we apply the 
 lemma for $\psi=f^{1/(k-1)}$ in the coordinate ball centered at $z$ with radius $r$ to get the statement.
\end{proof}

%\smallskip

\begin{remark} In the corollary it is crucial that the manifold has no boundary. As observed in \cite{B1} the function 
$\psi(t)=t$ on $(0,1)$ shows that it is in general impossible to control $|D\psi|^2$ by $\psi$ globally in the presence of boundary.
\end{remark}

\noindent{\bf Notation}. Throughout the paper, $(X,\om)$ will denote a compact K\"ahler manifold, $\Om$ will be a domain 
in $\co^n$ and $U$ will be a domain in $\mathbb R^n$ for some $n\geq 2$. The constant $C_0$ denotes the lower bound for the bisectional curvature associated to $\om$ i.e.
\begin{equation}\label{Czero}
C_0:=\sup_{x\in M}|\inf_{\eta,\zeta}R_{\eta\bar{\eta}\zeta\bar{\zeta}}|
\end{equation}
with $\zeta,\ \eta$ varying among the unit vectors in $T_xX$. Other constants dependent only on the pertinent quantities will be denoted by $C, C_i$ or $c_i$. We shall refer to these as constants {\it under control}.

 \section{The main estimate}
This section is devoted to the proof of the following a priori estimate:
\begin{theorem}\label{aaa} Let $u\in\ksh(X,\om)$ be a $\mathcal C^4(X)$ function solving the problem
\begin{equation}\label{dirichlet}
\begin{cases} 
(\om+i\ddb u)^k\wedge\om^{n-k}=f\om^n\\
\int_Xu\,\om^n=0
\end{cases}
\end{equation}
where the nonnegative function $f$ satisfies the compatibility assumption $\int_Mf\,\om^n=\int_M\om^n$.
Suppose that $\|f\|_{\mathcal C^0}\leq B, \, \|f^{1/(k-1)}\|_{\mathcal C^1}\leq B$ and $\|f^{1/(k-1)}\|_{\mathcal C^{2}}\leq B$.
Then
\begin{equation}\label{main-est}
\sup_M\|i\ddb u\|_{\om}\leq C(\sup_M\|\n u\|^2+1)
\end{equation}
 for some constant $C$ dependent on $C_0,\ B,\ \om, n$ and $k$.
 \end{theorem}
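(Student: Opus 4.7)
I propose to follow the maximum-principle argument of Hou--Ma--Wu \cite{HMW}; the single new ingredient is the improved bound on second derivatives of $\log f$ supplied by Corollary~\ref{blocki}. Consider the test function
\[
Q(x) \;=\; \log\lambda_1(x) \;+\; \phi(|\nabla u|^2(x)) \;+\; \psi(u(x)),
\]
where $\lambda_1(x)$ is the largest eigenvalue of $\omega^{-1}\omega_u$ with $\omega_u := \omega + i\ddb u$, and $\phi,\psi$ are standard auxiliary one-variable functions (for instance $\phi(t)=-\tfrac12\log(1+\sup_X|\nabla u|^2-t)$ and $\psi(t)=A t$ with $A$ a large constant to be chosen). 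Let $x_0\in X$ realize the maximum of $Q$; pick holomorphic coordinates near $x_0$ normal for $\omega$ and diagonalizing $\omega_u(x_0)$, with eigenvalues $\lambda_1\geq\dots\geq\lambda_n$. We may assume $\lambda_1(x_0)\geq 1$, for otherwise the conclusion is already trivial. The plan is then to differentiate the logarithmic form of the equation, $\log\sigma_k(\omega^{-1}\omega_u)=\log f$, once and twice at $x_0$ and to combine the resulting identities with the critical relations $\partial_iQ(x_0)=0$ and $L(Q)(x_0)\leq 0$, where $L=F^{i\bar j}\partial_i\partial_{\bar j}$ is the linearization and $F^{i\bar j}=\partial\log\sigma_k/\partial(\omega_u)_{i\bar j}$. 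After the standard commutation of covariant derivatives, $L(Q)(x_0)\leq 0$ decomposes into (i) third-order terms of shape $F^{i\bar i}|u_{i1\bar 1}|^2/\lambda_1$; (ii) curvature errors bounded by $C_0$; (iii) good positive quantities generated by $\phi,\psi$, namely $\phi'\sum_i F^{i\bar i}\lambda_i^2$, $\psi''\sum_iF^{i\bar i}|u_i|^2$ and $-\psi' L(u)$; and (iv) a ``bad'' contribution from $(\log f)_{1\bar 1}$.

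The crucial new input is Corollary~\ref{blocki}: under $f^{1/(k-1)}\in\mathcal C^{1,1}$ one obtains
\[
(\log f)_{1\bar 1}(x_0) \;\geq\; -\tilde C\,f(x_0)^{-1/(k-1)},
\]
so that piece (iv) produces a bad term of order $\lambda_1\,f^{-1/(k-1)}$. To dominate it, the Maclaurin-type inequality (property (2) of Section~2) yields
\[
\sum_i F^{i\bar i} \;=\; (n-k+1)\,\frac{\sigma_{k-1}(\lambda)}{\sigma_k(\lambda)} \;\geq\; c(n,k)\,\frac{\sigma_1(\lambda)^{1/(k-1)}}{f(x_0)^{1/(k-1)}},
\]
so that the good term $\phi'\lambda_1^{\,2}\sum_i F^{i\bar i}$ already carries the required $f^{-1/(k-1)}$ weight, and once $\lambda_1$ is taken large enough it overwhelms the bad one. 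This is precisely where the exponent $1/(k-1)$ is borderline: the weight of $f$ coming from Corollary~\ref{blocki} exactly matches the weight arising from Maclaurin II. Any weaker assumption on $f$ would leave a positive residual power of $f^{-1}$ that would no longer be absorbable.

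The main obstacle, and the technical core of the argument, is the control of the third-order quantity $F^{i\bar i}|u_{i1\bar 1}|^2/\lambda_1$. Following the Hou--Ma--Wu decomposition, I would split the indices into a ``large'' set where $\lambda_i\geq\delta\lambda_1$ and a ``small'' set where $\lambda_i<\delta\lambda_1$, with $\delta>0$ a small constant to be tuned. For large indices, the commutation formula $u_{i1\bar 1}=u_{11\bar i}+O(1)$ together with the critical relations $\partial_iQ(x_0)=0$ reduces the third-order terms to quantities of order $|\nabla u|^2$. For small indices, the concavity of $\log\sigma_k$ and G\aa rding's identities (properties (3)--(5) of Section~2) produce a quantitative gap $F^{i\bar i}\geq (1+c)\,F^{1\bar 1}$, which supplies exactly the coercivity needed to re-absorb the leftover third-order pieces into $\phi'\sum_iF^{i\bar i}\lambda_i^2$. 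Choosing $\delta$, $\phi$ and $A$ large enough to balance all constants, $L(Q)(x_0)\leq 0$ forces $\lambda_1(x_0)\leq C(1+|\nabla u(x_0)|^2)$, and by the maximum property of $Q$ this bound propagates to every point of $X$, giving the announced estimate.
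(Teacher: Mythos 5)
Your proposal follows the same high-level strategy as the paper: the Hou--Ma--Wu maximum-principle argument for $\log\lambda_1+\varphi(|\nabla u|^2)+\psi(u)$, with the new ingredients being (i) the bound $(\log f)_{1\bar 1}\geq -\tilde C f^{-1/(k-1)}$ from Corollary~\ref{blocki} and (ii) Maclaurin's inequality II giving $\mathcal S\geq c(n,k)\lambda_1^{1/(k-1)}/f^{1/(k-1)}$, so that multiplying the maximum-principle inequality by $f^{1/(k-1)}$ removes all degeneracy. The paper also uses $S=\log\sigma_k$ rather than $\sigma_k^{1/k}$, which you implicitly do by working with $\log f$. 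Up to this point the plan is sound and matches the paper.

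The gap is in your choice of $\psi$. You propose $\psi(t)=At$ with $A>0$, but you simultaneously list $\psi''\sum_iF^{i\bar i}|u_i|^2$ among your ``good positive quantities,'' which is zero for linear $\psi$. That term is not optional: in the paper's Case~2 the residual third-order pieces indexed by $I=\{p:\,S^{p\bar p}>\delta^{-1}S^{1\bar 1}\}$ are absorbed using precisely $\varphi''$ together with $\psi''\geq\frac{2\delta}{1-\delta}(\psi')^2$ (via Proposition~2.3 of~\cite{HMW}); with $\psi''=0$ this absorption fails. Worse, with $A>0$ one has $\psi'>0$, so the zeroth-order contribution $\psi'(k-\mathcal S)$ becomes arbitrarily negative as $f\to 0$ (since $\mathcal S\sim f^{-1/(k-1)}$), and your inequality no longer closes after multiplying by $f^{1/(k-1)}$. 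The paper's choice $\psi(t)=-A\log(1+t/2L)$ is made exactly to guarantee both $\psi'<0$ and $\psi''>0$. A secondary inaccuracy: the index split is not ``$\lambda_i\geq\delta\lambda_1$ vs.\ $\lambda_i<\delta\lambda_1$'' with a gap $F^{i\bar i}\geq(1+c)F^{1\bar 1}$; the paper first splits on whether $\lambda_n<-\delta\lambda_1$ (Case~1, handled directly via $\varphi'\sum F^{p\bar p}\lambda_p^2\geq\frac{\delta^2}{4nK}\mathcal S\lambda_1^2$), and in Case~2 uses $I$ as above together with the concavity inequality $-S^{p\bar1,1\bar p}\geq(1-2\delta)S^{p\bar p}/\lambda_1$ from \cite{HMW}, not a re-absorption into $\varphi'\sum F^{i\bar i}\lambda_i^2$. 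These details need to be repaired for the argument to go through.
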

 
\noindent Using the above $\mathcal C^2$ estimate, one can repeat the blow-up argument from \cite{DK} to deduce an indirect gradient bound for $u$. Coupling this information with (\ref{main-est}), we get the following result:
\begin{theorem}\label{bbb} If $u\in\ksh(X,\om)$ solves the problem (\ref{dirichlet}) with the assumption $f^{1/(k-1)}\in\mathcal C^{1,1}$, then $u$ belongs to the weak $\mathcal C^{1,1}$ space, i.e. the Laplacian of $u$ is bounded.
\end{theorem}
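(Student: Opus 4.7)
The strategy is to treat Theorem \ref{aaa} as a \emph{conditional} second-order estimate and close it into an unconditional one via a uniform gradient bound obtained by the indirect blow-up method of \cite{DK}. Since the hypothesis $f^{1/(k-1)}\in\mathcal C^{1,1}$ allows $f$ to vanish, the prospective solution $u$ is not $\mathcal C^4$ and Theorem \ref{aaa} cannot be invoked directly. I would therefore first approximate: regularize $g:=f^{1/(k-1)}\geq 0$ by mollification, add a small constant $\varepsilon>0$ to ensure strict positivity, take the $(k{-}1)$-th power, and finally rescale to preserve the compatibility condition $\int_X f_\varepsilon\,\omega^n=\int_X\omega^n$. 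This yields smooth, strictly positive $f_\varepsilon\to f$ uniformly with $\|f_\varepsilon^{1/(k-1)}\|_{\mathcal C^{1,1}}$ uniformly bounded by $\|g\|_{\mathcal C^{1,1}}+o(1)$. By Hou--Ma--Wu \cite{HMW} together with \cite{DK}, each regularized equation
\begin{equation*}
(\omega+i\ddb u_\varepsilon)^k\wedge\omega^{n-k}=f_\varepsilon\,\omega^n,\qquad \int_X u_\varepsilon\,\omega^n=0,
\end{equation*}
admits a unique smooth admissible solution $u_\varepsilon\in\ksh(X,\omega)$.

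Next I would apply Theorem \ref{aaa} to each $u_\varepsilon$. Since the constant $C$ there depends only on $C_0,\omega,n,k$ and the $\mathcal C^{1,1}$-norm of $f_\varepsilon^{1/(k-1)}$, all of which are under uniform control, we obtain
\begin{equation*}
\sup_X\|i\ddb u_\varepsilon\|_\omega\leq C\bigl(1+\sup_X\|\nabla u_\varepsilon\|^2\bigr),
\end{equation*}
with $C$ independent of $\varepsilon$. The indirect blow-up argument of \cite{DK} then upgrades this conditional bound to an unconditional one: assuming by contradiction that $M_\varepsilon:=\sup_X\|\nabla u_\varepsilon\|\to\infty$, one blows up at a maximum point by the factor $M_\varepsilon$. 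The uniform $L^\infty$ bound on $u_\varepsilon$ (which follows from the pluripotential estimates of \cite{DK1}, as $f_\varepsilon$ is uniformly bounded in $L^\infty$, hence in every $L^p$) together with the conditional second-order bound just established, guarantee that the rescaled sequence is locally bounded in $\mathcal C^{1,\alpha}$ and produces a non-constant, bounded, maximal $k$-subharmonic function on $\mathbb C^n$, contradicting the Liouville-type statement used in \cite{DK}. Therefore $\sup_\varepsilon M_\varepsilon<\infty$ and feeding this back into the displayed inequality gives $\sup_X\|i\ddb u_\varepsilon\|_\omega\leq C$ uniformly in $\varepsilon$.

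Finally I would pass to the limit. The uniform $\mathcal C^{1,1}$ bound on the normalized $u_\varepsilon$ yields, via Arzel\`a--Ascoli, a subsequence converging in $\mathcal C^{1,\alpha}(X)$ for every $\alpha<1$ to a function $\tilde u\in\ksh(X,\omega)$ with bounded distributional Laplacian and $\int_X\tilde u\,\omega^n=0$. Because $f_\varepsilon\to f$ uniformly, $\tilde u$ satisfies the Hessian equation in the pluripotential sense, and the uniqueness of normalized bounded solutions in $\ksh(X,\omega)$ (\cite{DC}, a consequence of the comparison principle in Theorem \ref{compprinkahler}) identifies $\tilde u=u$. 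Hence $u$ has bounded Laplacian, i.e.\ lies in weak $\mathcal C^{1,1}$. The only genuinely nontrivial ingredient is the blow-up/Liouville step, but since Theorem \ref{aaa} provides exactly the conditional bound needed by the argument of \cite{DK}, no new ideas are required beyond those already in the literature.
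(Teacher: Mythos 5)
Your proof is correct and takes essentially the same route as the paper: regularize so that $f_\varepsilon^{1/(k-1)}$ is smooth, strictly positive, and uniformly controlled in $\mathcal C^{1,1}$, solve the approximating equations, invoke Theorem \ref{aaa} plus the blow-up gradient estimate of \cite{DK} to get a uniform bound on $\Delta_\omega u_\varepsilon$, and pass to the limit. The only difference is cosmetic: you write out the blow-up/Liouville step explicitly, whereas the paper folds it into the remark preceding the theorem and then cites the resulting unconditional Laplacian bound directly inside the proof.
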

\begin{proof}[Proof of Theorem \ref{bbb}]
The argument can be found in \cite{B1}. We provide the details for the sake of completeness.

Given any $f$ as in the statement there is a family $f_{\ep}, \ep\in (0,1)$ of smooth strictly positive functions uniformly  convergent as $\ep\searrow 0$ to $f$ such that additionally  $f_\ep^{1/(k-1)}$ tends to $f^{1/(k-1)}$ in $\mathcal C^{1,1}$ norm (one way to produce such a family is to use a convolution in local charts coupled with a partition of unity, see \cite{B1} for the details). Let also $C_{\ep}$ be a positive constant such that
$$\int_MC_\ep f_{\ep}\om^n=\int_Mf\om^n=\int_M\om^n.$$
It follows that $\lim_{\ep\rightarrow 0}C_\ep=1$. Furthermore we can assume that 
\[
\|(C_\ep f_\ep)^{1/(k-1)}\|_{\mathcal C^2}\leq 2\|f^{1/(k-1)}\|_{\mathcal C^2}.
\]

\noindent Hence the solutions $u_\ep\in\ksh(X,\om)$ to the problem
\begin{equation}\label{dirichletep}
\begin{cases} 
(\om+i\ddb u_\ep)^k\wedge\om^{n-k}=C_\ep f_\ep\om^n\\
\int_Xu_\ep\,\om^n=0,
\end{cases}
\end{equation}
(which are smooth by the Calabi-Yau type theorem from \cite{DK}) converge in $L^1(X,\om)$ to $u$ (see Corollary 4.2 in \cite{C}). 

On the other hand we have as an application of Theorem \ref{aaa} the bound
$\Delta_{\om}u_\ep\leq C$
for a constant $C$ dependent only on $\|f^{1/(k-1)}\|_{\mathcal C^2}, n, k$ and the lower bound of the bisectional curvature. In particular the bound does not depend on $\ep$ and hence passing to the limit we obtain $\Delta_{\om}u\leq C$ which implies the claimed result.
\end{proof}

\

\noindent Now we proceed to the proof of the main a priori estimate:
\begin{proof}[Proof of Theorem \ref{aaa}]
We will work, just as in \cite{B1} under the assumption that $f>0$ (for example using  the approximate problems (\ref{dirichletep})) and we will obtain an estimate independent of $\inf_Xf$. This is done in order to avoid confusion as we shall divide by $f$ in the argument. Then, if needed, one can repeat the final part of the argument in the proof of Theorem \ref{bbb} to drop the assumption $f>0$. Our proof will follow closely the argument in \cite{HMW}. 
\smallskip

Given a point $x\in M$ we consider a fixed local coordinate system $(z_1,\cdots, z_n)$ centered at $x$. By re-choosing the coordinates if necessary, one can assume that the form $\om=i \, g_{\bar{k}j}dz^j\we d\bar{z}^k$ is diagonal at $x$.
% and furthermore satisfies $\frac{\pa g_{m\bar{l}}}{\pa z_s}=0$ for all indices $m,l,s$. (since we are computing with covariant derivatives with respect to the metric $g$. So, $\nabla g=0$ if we using the Levi-Civita connection.)
We follow the notation in \cite{HMW} and use the covariant derivatives with respect to the background K\"ahler metric $\omega$ to do the calculation. In particular for any function $h$ defined near $x$ let 
$h_i=\n_{\p/\p z^{i}} h,$ $h_{i\bar{j}}=\n_{\p/\p \bar{z}^{j}}\n_{\p/\p z^{i}}h,$ etc. 
%denote the covariant derivatives with respect to the frame $(\p/\p z^1,\cdots,\p/\p z^n)$.

\smallskip

As in \cite{HMW}, we consider the quantity
\begin{equation}\label{G}
\tilde{G}(z,\xi):=\log(1+u_{i\bar{j}}\xi^{i}\bar{\xi}^j)+\varphi(|\n u|^2)+\psi(u)
\end{equation}
defined for any $z\in X$ and any unit vector $\xi\in T_z^{1,0}X$. The relevant quantities are defined as follows:
\begin{equation}\label{phi}
\varphi(t):=-{1\over 2}\log(1-{t\over 2K}) \ \  \textit{ \rm with } K:=\sup_M|\n u|^2+1;
\end{equation}
%with
%\begin{equation}\label{K}
%K:=\sup_M|\n u|^2+1;
%\end{equation}
and
\begin{equation}\label{psi}
\psi(t):=-A\log(1+{t\over 2L}) \ \ \textit{ \rm with } L:=\sup_M|u|+1,\ \ A=3L(2C_0+1).
\end{equation}
%with
%\begin{equation}\label{LandA}
%L:=\sup_M|u|+1,\ \ \ \ A=3L(2C_0+1).
%\end{equation}
\smallskip

The  properties of $\varphi$ and $\psi$ that we shall use are listed below:
\begin{equation}\label{propphi}
\frac12\log2\geq\varphi(|\n u|^2)\geq 0,\ \ \ \frac1{2K}\geq\varphi'(|\n u|^2)\geq \frac1{4K}>0,
\end{equation}
\begin{equation}\label{varphi12}
 \varphi''(|\n u|^2)=2[\varphi'(|\n u|^2)]^2>0
\end{equation}
and
\begin{equation}\label{proppsi}
A\log{1\over 2}\geq\psi\geq  A\log{2\over 3},\ \ \  \frac AL\geq-\psi'(u)\geq\frac A{3L}=2C_0+1,
\end{equation}
\begin{equation*}
\psi''(u)\geq\frac{2\ep}{1-\ep}(\psi'(u))^2, \ {\rm for\ all}\ \ep\leq\frac1{2A+1}.
\end{equation*}
Suppose $\tilde{G}$ attains maximum a point $x_0\in X$ and a tangent direction $\xi_0\in T_{x_0} X$. In a standard way we construct normal coordinate system at $x_0$ and assume that $\xi_0=g_{1\bar{1}}^{-1/2}\frac{\p}{\p z^1}$. We may also assume that $u_{i\bar{j}}$ is diagonal at $x_0$, i.e.,
\[
u_{i\bar{j}}(x_0)=\delta_{ij}u_{i\bar{i}}(x_0).
\]
Then $\la_i:=1+u_{i\bar{i}}(x_0)$ are the eigenvalues of $\om+dd^cu$ with respect to $\omega$ at $x_0$. Therefore, near $x_0$, the function 
\begin{equation}
G(z)=\log(1+g_{1\bar{1}}^{-1}u_{1\bar{1}})+\varphi(|\n u|^2)+\psi(u)
\end{equation}
is well defined and has a maximum at $x_0$. At this moment we mention that $u_{1\bar{1}}(x_0)$ is of the same size as $\Delta_{\om}u(x_0)$ (meaning that for a numerical constant $C_n$ one has $C_n^{-1}u_{1\bar{1}}(x_0)\leq \Delta_{\om}u(x_0)\leq C_nu_{1\bar{1}}(x_0)$), since $\la_i\in \Gamma_k$ with $k\geq 2$ and hence
$\sum_{j=2}^n\la_i\geq 0.$
Note that in order to get the claimed global  bound for the Laplacian in terms of the supremum of the gradient it is thus  sufficient to bound $u_{1\bar{1}}(x_0)$ by an expression which is of linear growth in $K$. To this end let us take the nonlinear operator
\[
S(\om+i\ddb u):=\log\sigma_k(\omega+i\ddb u)
\]
which is different from $F=\sigma_k^{1/k}$ used in \cite{HMW}. Using the diagonality of $\om$ and $u_{i\bar{j}}$  at $x_0$ we compute that
\begin{equation}\label{sij}
S^{i\bar{j}}:=\frac{\p S(\om+i\ddb u)}{\p u_{i\bar{j}}}=\delta_{ij}\frac{\sigma_{k-1}(\la|i)}{\sigma_k(\la)}.
\end{equation}
At $x_0$ the second derivatives $S^{i\bar{j},p\bar{q}}:=\frac{\p^2 S}{\p u_{i\bar{j}}\p u_{p\bar{q}}}$ are zero except in the following cases:
\[
S^{i\bar{i},p\bar{p}}=(1-\delta_{ip})\frac{\sigma_{k-2}(\la|ip)}{\sigma_k(\la)}-\frac{\sigma_{k-1}(\la|i)\sigma_{k-1}(\la|p)}{\sigma_k^2(\la)}
\]
and for $i\neq p$
\[
S^{i\bar{p},p\bar{i}}=-\frac{\sigma_{k-2}(\la|ip)}{\sigma_k(\la)}.
\]
\par
Observe also that at $x_0$
\begin{equation}\label{Sandu}
\sum_{i=1}^nS^{i\bar{i}}(1+u_{i\bar{i}})=\sum_{i=1}^nS^{i\bar{i}}\la_i=k.
\end{equation}

\noindent Differentiating the equation $S(\omega+dd^cu)=\log f$ and commuting the covariant derivatives we obtain the formulas (compare \cite{HMW}) that at $x_0$
\begin{equation}\label{commute3}
\sum_{p=1}^nS^{p\bar{p}}u_{jp\bar{p}}=(\log f)_j+\sum_{p,q=1}^nu_qS^{p\bar{p}}R_{j\bar{q}p\bar{p}}
\end{equation}
and
\begin{equation}\label{commute4}
\sum_{p=1}^nS^{p\bar{p}}u_{1\bar{1}p\bar{p}}=(\log f)_{1\bar{1}}-\sum_{i,j,r,q=1}^nS^{i\bar{j},r\bar{q}}u_{i\bar{j}1}u_{r\bar{q}\bar{1}}+\sum_{p=1}^n
S^{p\bar{p}}(u_{1\bar{1}}-u_{p\bar{p}})R_{1\bar{1}p\bar{p}}.
\end{equation}
Returning to $G$ from the extremal property at $x_0$ we have the following formula
\begin{equation}\label{firstorder}
0=G_p=\frac{u_{1\bar{1}p}}{1+u_{1\bar{1}}}+\varphi'u_pu_{\bar{p}p}+\varphi'\sum_{j=1}^nu_{jp}u_{\bar{p}}+\psi'u_p.
\end{equation}
Also by diagonality, ellipticity, the equation itself and (\ref{commute4}) we get
\begin{eqnarray}\label{secondorder}
0&\geq &\sum_{p=1}^nS^{p\bar{p}}G_{p\bar{p}}\\
&=&\sum_{p=1}^n\frac{S^{p\bar{p}}u_{1\bar{1}p\bar{p}}}{1+u_{1\bar{1}}}-\sum_{p=1}^n\frac{S^{p\bar{p}}|u_{1\bar{1}p}|^2}{(1+u_{1\bar{1}})^2}+2 \varphi' {\rm Re}[(\log f)_{\bar{j}}u_j]+\varphi'\sum_{p,q,r=1}^nu_{\bar{r}}u_qS^{p\bar{p}}R_{p\bar{p}r\bar{q}}\nonumber\\
&&+\sum_{p=1}^n\varphi'S^{p\bar{p}}|u_{p\bar{p}}|^2+\sum_{p=1}^n\varphi'S^{p\bar{p}}\sum_{j=1}^n|u_{jp}|^2+\varphi''\sum_{p=1}^nS^{p\bar{p}}|\sum_{j=1}^nu_{jp}u_{\bar{j}}+u_pu_{p\bar{p}}|^2\nonumber\\
&&+\psi''\sum_{p=1}^nS^{p\bar{p}}|u_p|^2+\psi'k-\psi'\sum_{p=1}^nS^{p\bar{p}}.\nonumber
%\\
%&=&I+II+\cdots+X.\nonumber
\end{eqnarray}
The first term can be estimated by exploiting (\ref{commute4}), analogously to \cite{HMW} we have
\begin{eqnarray}\label{4orderterm}
\sum_{p=1}^n\frac{S^{p\bar{p}}u_{1\bar{1}p\bar{p}}}{1+u_{1\bar{1}}}\geq 
-\la_1^{-1}\sum_{i,j,r,q=1}^nS^{i\bar{j},r\bar{q}}u_{i\bar{j}1}u_{r\bar{q}\bar{1}}-C_0\sum_{p=1}^nS^{p\bar{p}}-C_0k+\frac{(\log f)_{1\bar{1}}}{\la_1}.
\end{eqnarray}
Denote $\mathcal S:=\sum_{p=1}^nS^{p\bar{p}}$. Then the fourth term in (\ref{secondorder}) can be estimated from below by
\begin{equation}\label{curvterm}
\varphi'\sum_{p,q,r=1}^nu_{\bar{r}}u_qS^{p\bar{p}}R_{p\bar{p}r\bar{q}}\geq -K\varphi'\mathcal S\, C_0\geq -\frac{C_0}2\mathcal S,
\end{equation}
where we used the property (\ref{propphi}) of $\varphi'$. The fifth term can be rewritten as
\begin{equation}\label{laisquare}
\sum_{p=1}^n\varphi'S^{p\bar{p}}|u_{p\bar{p}}|^2=\sum_{p=1}^n\varphi'S^{p\bar{p}}|\la_p-1|^2=\sum_{p=1}^n\varphi'S^{p\bar{p}}\la_p^2-2\varphi' k+\varphi' \mathcal S.
\end{equation}
The sixth term is obviously nonnegative. So coupling (\ref{secondorder}) with (\ref{4orderterm}), (\ref{curvterm}) and (\ref{laisquare}) we obtain
\begin{eqnarray}\label{newsecondorder}
0&\geq& -\sum_{i,j,r,q=1}^n\frac{S^{i\bar{j},r\bar{q}}u_{i\bar{j}1}u_{r\bar{q}\bar{1}}}{1+u_{1\bar{1}}}-\sum_{p=1}^n\frac{S^{p\bar{p}}|u_{1\bar{1}p}|^2}{(1+u_{1\bar{1}})^2}\\\nonumber
&&+\psi''\sum_{p=1}^nS^{p\bar{p}}|u_p|^2+\varphi''\sum_{p=1}^nS^{p\bar{p}}|\sum_{j=1}^nu_{jp}u_{\bar{j}}+u_pu_{p\bar{p}}|^2+\varphi'\sum_{p=1}^nS^{p\bar{p}}\la_p^2\\\nonumber
&&+(-\psi'+\varphi'-2C_0)\mathcal S+\frac{(\log f)_{1\bar{1}}}{\la_1}+2\varphi' {\rm Re}[(\log f)_{\bar{j}}u_j]-(2\varphi'+\psi'-C_0 )k.
%\\\nonumber
%&=&(1)+(2)+\cdots+(9).
\end{eqnarray}

\medskip
Up to now we have followed \cite{HMW}. The big difference is that the last three terms, contained in the constant $C_2$ in \cite{HMW}, are not controllable from below in our setting. Define the constant $\delta:=\frac{1}{2A+1}$. Let us divide the analysis into two separate cases:

\smallskip

{\bf Case 1: Suppose that $\la_n<-\delta\la_1$}. Using the critical equation (\ref{firstorder}), we can exchange the term second term in (\ref{newsecondorder}) by
\[
-\sum_{p=1}^n\frac{S^{p\bar{p}}|u_{1\bar{1}p}|^2}{(1+u_{1\bar{1}})^2} =-\sum_{p=1}^nS^{p\bar{p}}|\varphi'u_pu_{\bar{p}p}+\varphi'\sum_{j=1}^nu_{jp}u_{\bar{p}}+\psi'u_p|^2.
\]
By Schwarz inequality this is further estimated from below by
\[
-\sum_{p=1}^n\frac{S^{p\bar{p}}|u_{1\bar{1}p}|^2}{(1+u_{1\bar{1}})^2}  \geq -2(\varphi')^2\sum_{p=1}^nS^{p\bar{p}}\big|\varphi'u_pu_{\bar{p}p}+\varphi'\sum_{j=1}^nu_{jp}u_{\bar{p}}\big|^2-2(\psi')^2\mathcal S|\n u|^2.
\]

\noindent Note that, by the choice of $\varphi$ (\ref{varphi12}), the first term above annihilates the fourth term in (\ref{newsecondorder}). The second one is bounded, using (\ref{proppsi}), by $-2(6C_0+3)^2K\mathcal S$. Furthermore the first term in (\ref{newsecondorder}) is nonnegative by the concavity of the $S= \log \sigma_k$ operator, and the sixth term is also nonnegative by (\ref{propphi}) and (\ref{proppsi}). Coupling the above inequalities we obtain
\begin{eqnarray}\label{whatremains}
0&\geq& \varphi'\sum_{p=1}^nS^{p\bar{p}}\la_p^2-18(2C_0+1)^2K\mathcal S+\frac{(\log f)_{1\bar{1}}}{\la_1}+2\varphi' {\rm Re}[(\log f)_{\bar{j}}u_j]\\\nonumber
&&-(2\varphi'+\psi'-C_0 )k.
\end{eqnarray}
As $\varphi'\geq\frac1{4K}$, the first of these new terms is estimable by 
\[
\varphi'\sum_{p=1}^nS^{p\bar{p}}\la_p^2\geq {1\over 4K} S^{n\bar n} \lambda_n^2 \geq \frac{1}{4nK}\mathcal S\la_n^2\geq \frac{1}{4nK}\delta^2\mathcal S\la_1^2.
\]
Here we used the case assumption and the fact that the coefficients $S^{j\bar{j}}$ increase in $j$. Next, using Corollary \ref{blocki} and the fact that $\|f^{1/(k-1)}\|_{\mathcal C^1}, \|f^{1/(k-1)}\|_{\mathcal C^{1,1}}$ are
bounded, the last three terms in (\ref{whatremains}) can be estimated from below as 
\[
\frac{(\log f)_{1\bar{1}}}{\la_1}+2\varphi' {\rm Re}[(\log f)_{\bar{j}}u_j]-(2\varphi'+\psi'-C_0 )k
\geq -\frac{C}{\la_1f^{1/(k-1)}}-\frac{C}{\sqrt{K}f^{1/(k-1)}}-C
\]
for some constant $C$ dependent  on $C_0, k, B$ and $n$. Finally by MacLaurin inequality
\begin{eqnarray}\label{use-garding}
\mathcal S&=&\sum_{p=1}^nS^{p\bar{p}}=\sum_{p=1}^n\frac{\sigma_{k-1}(\la|p)}{\sigma_{k}(\la)}=(n-k+1)\frac{\sigma_{k-1}(\la)}{\sigma_{k}(\la)}\\\nonumber
&\geq& c(n,k)\frac{\sigma_k^{(k-2)/(k-1)}\sigma_1^{1/(k-1)}}{\sigma_k}\geq c(n,k)\frac{\la_1^{1/(k-1)}}{f^{1/(k-1)}}.
\end{eqnarray}

\noindent Therefore, multiplying both sides of the inequality (\ref{whatremains}) by $f^{1/(k-1)}$, we get
\[
0\geq c(n,k)\la_1^{1/(k-1)}\left(\frac{\delta^2}{4Kn}\la_1^2-18(2C_0+1)^2K\right)-C-C\left(1+\sup_Mf^{1/(k-1)}\right).
\]
It follows that
\[
\la_1^2\leq CK^2+\frac{CK}{\delta^2\la_1^{1/(k-1)}},
\]
for some $C$ under control.

\smallskip

{\bf Case 2: Assume $\la_n\geq -\delta \la_1$}. Exactly as in \cite{HMW}, we consider
\[
I:=\left\{i\in\{1,\cdots,n\}\, |\,  \sigma_{k-1}(\la|i)>\delta^{-1}\sigma_{k-1}(\la|1)\right\}.
\]

\noindent As $\delta^{-1}={2A+1}\geq 7$, $i=1$ does not belong to $I$. Returning to our setting we get that $p\in I$ if and only if
\[
S^{p\bar{p}}>\delta^{-1}S^{1\bar{1}}.
\]

\noindent Then exploiting (\ref{firstorder}) and the Schwarz inequality we have
\begin{eqnarray*}
&&-\sum_{p\in\{1,\cdots,n\}\setminus I}\frac{S^{p\bar{p}}|u_{1\bar{1}p}|^2}{(1+u_{1\bar{1}})^2}\\
&\geq& -2(\varphi')^2\sum_{p\in\{1,\cdots,n\}\setminus I}S^{p\bar{p}}|u_pu_{p\bar{p}}+\sum_{j=1}^nu_{jp}u_{\bar{p}}|^2-2(\psi')^2\sum_{p\in\{1,\cdots,n\}\setminus I}S^{p\bar{p}}|u_p|^2\\
&\geq&-2(\varphi')^2\sum_{p\in\{1,\cdots,n\}\setminus I}S^{p\bar{p}}|u_pu_{p\bar{p}}+\sum_{j=1}^nu_{jp}u_{\bar{p}}|^2 -18(2C_0+1)^2KS^{1\bar{1}}.
\end{eqnarray*}

\noindent Using the same strategy as in case 1, the first term annihilates the following term in (\ref{newsecondorder}):
\[
\varphi''\sum_{p\in\{1,\cdots,n\}\setminus I}S^{p\bar{p}}|u_pu_{p\bar{p}}+\sum_{j=1}^nu_{jp}u_{\bar{p}}|^2.
\]

\noindent What remains from (\ref{newsecondorder}) can be written as
\begin{eqnarray*}
0&\geq &-\sum_{i,j,r,q=1}^n\frac{S^{i\bar{j},r\bar{q}}u_{i\bar{j}1}u_{r\bar{q}\bar{1}}}{1+u_{1\bar{1}}}-\sum_{p\in I}\frac{S^{p\bar{p}}|u_{1\bar{1}p}|^2}{(1+u_{1\bar{1}})^2}+(-\psi'+\varphi'-2C_0)\mathcal S\\
&&+\varphi''\sum_{p\in I}S^{p\bar{p}}|u_pu_{p\bar{p}}+\sum_{j=1}^nu_{jp}u_{\bar{p}}|^2+\psi''\sum_{p=1}^nS^{p\bar{p}}|u_p|^2+\frac1{4K}\sum_{p=1}^nS^{p\bar{p}}\la_p^2\\
&&+\frac{(\log f)_{1\bar{1}}}{\la_1}+2\varphi' {\rm Re}[(\log f)_{\bar{j}}u_j]-(2\varphi'+\psi'-C_0 )k-18(2C_0+1)^2KS^{1\bar{1}}.
\end{eqnarray*}
If $\la_{1}^2\geq [12(2C_0+1)K]^2$ (which we can safely assume a priori for otherwise we are through) the last term can be absorbed by the sixth one. Also $-\psi'+\varphi'-2C_0\geq 1$, therefore the previous estimate is reduced to 
\begin{eqnarray*}
0&\geq& -\sum_{i,j,r,q=1}^n\frac{S^{i\bar{j},r\bar{q}}u_{i\bar{j}1}u_{r\bar{q}\bar{1}}}{1+u_{1\bar{1}}}-\sum_{p\in I}\frac{S^{p\bar{p}}|u_{1\bar{1}p}|^2}{(1+u_{1\bar{1}})^2}+\varphi''\sum_{p\in I}S^{p\bar{p}}|u_pu_{p\bar{p}}+\sum_{j=1}^nu_{jp}u_{\bar{p}}|^2\\
&&+\psi''\sum_{p=1}^nS^{p\bar{p}}|u_p|^2+\frac1{8K}\sum_{p=1}^nS^{p\bar{p}}\la_p^2+\mathcal S+\frac{(\log f)_{1\bar{1}}}{\la_1}+2\varphi' {\rm Re}[(\log f)_{\bar{j}}u_j]\\
&&-(2\varphi'+\psi'-C_0 )k.
\end{eqnarray*}
As as case 1, the last three terms can be estimated by $-\frac{C}{f^{1/(k-1)}}$
for some constant $C$ dependent on $B, n, C_0$ and $k$. So if the first four terms add up to something nonnegative then we end up with
\[
\frac{C_3}{f^{1/(k-1)}}\geq \mathcal S+\frac1{8K}\sum_{p=1}^nS^{p\bar{p}}\la_p^2.
\]
This together with (\ref{use-garding}) imply $\la_1\leq C$.

\smallskip

What remains is to prove the non-negativity of 
\begin{eqnarray}\nonumber
-\sum_{i,j,r,q=1}^n\frac{S^{i\bar{j},r\bar{q}}u_{i\bar{j}1}u_{r\bar{q}\bar{1}}}{1+u_{1\bar{1}}}-\sum_{p\in I}\frac{S^{p\bar{p}}|u_{1\bar{1}p}|^2}{(1+u_{1\bar{1}})^2}+\varphi''\sum_{p\in I}S^{p\bar{p}}|u_pu_{p\bar{p}}+\sum_{j=1}^nu_{jp}u_{\bar{p}}|^2+\psi''\sum_{p=1}^nS^{p\bar{p}}|u_p|^2.
\end{eqnarray}

\noindent Exploiting (\ref{firstorder}) and Proposition 2.3 from \cite{HMW} the last two terms can be estimated from below by
\[
\sum_{p\in I}S^{p\bar{p}}\left[2(\varphi')^2\big|u_pu_{p\bar{p}}+\sum_{j=1}^nu_{jp}u_{\bar{p}}\big|^2+\frac{2\delta}{1-\delta}|u_p|^2\right]\geq2\delta\sum_{p\in I}\frac{S^{p\bar{p}}|u_{1\bar{1}p}|^2}{(1+u_{1\bar{1}})^2}.
\]

\noindent On the other hand, the concavity of the $S=\log \sigma_k$ operator yields that the first term is  controlled from below by
\[
-\sum_{i,j,r,q=1}^n\frac{S^{i\bar{j},r\bar{q}}u_{i\bar{j}1}u_{r\bar{q}\bar{1}}}{1+u_{1\bar{1}}}\geq -\sum_{p\in I}\frac{S^{p\bar{1},1\bar{p}}|u_{1\bar{1}p}|^2}{1+u_{1\bar{1}}}.
\]

\noindent Therefore, the inequality to-be-proven will be satisfied if
\[
-S^{p\bar{1},1\bar{p}}\geq (1-2\delta)\frac{S^{p\bar{p}}}{\la_1},
\]
for each $p\in I$. Exploiting the formulas for $S^{p\bar{1},1\bar{p}}$ and $S^{p\bar{p}}$ this is in turn equivalent to
\[
\frac{\sigma_{k-2}(\la|1p)}{\sigma_k}\geq (1-2\delta)\frac{\sigma_{k-1}(\la|p)}{\la_1\sigma_k}.
\]
But the inequality above is exactly the inequality proven in \cite{HMW} (page 559).
Indeed, the inequality can be rewritten as
\[
(2\delta\la_1+(1-2\delta)\la_p)\sigma_{k-1}(\la|p)\geq \la_1\sigma_{k-1}(\la|1)
\]
and the latter one holds due to the case assumptions $\la_p\geq\la_n>-\delta\la_1$ and $\sigma_{k-1}(\la|p)\geq\delta^{-1}\sigma_{k-1}(\la|p)$.
 Thus the claimed inequality is proven.

\end{proof}

\section{Examples} 

 In this section we shall investigate the examples from Proposition \ref{Plis} in the real and complex domains. We will also deal with the complex compact manifold case. 
 
 \smallskip

\noindent As mentioned in the Introduction, it was stated in \cite{ITW} that a modification of the argument from the Monge-Amp\`ere case (see \cite{W1}) shows that the exponent $1/(k-1)$
on the right hand side cannot be improved any further. An important feature of these examples is that they are separately radial in all but one of the coordinates and radial in the distinguished coordinate. In the convex setting this means that 
\[
u(x',x_n)=u(y',y_n), \ \textit{ \rm whenever }  |x'|=|y'| \textit{ \rm and } |x_n|=|y_n|. 
\]
Here, we use the notation $x=(x',x_n)=(x_1,\ldots,x_n)$.

\smallskip

Note that, for convex $u$, this implies that $u$ {\it is increasing} in the both radial directions. The same observation holds for a pluri-subharmonic function $v(z',z_n)$ radial in both directions and this was heavily used in \cite{P}. What makes the $k$-Hessian case different is that a priori such a $k$-convex function will be increasing in the directions $x'$ {\it only} as the $2$-convex example
\[
u(x',x_3)=u(x_1,x_2,x_3)=3(x_1^2+x_2^2)-x_3^2
\]
shows. We will nevertheless prove an additional lemma showing that our examples are indeed increasing in the 
radial $x_n$ (respectively $z_n$) directions. Given this lemma, the proof is indeed analogous to the one in \cite{W1} in the $k$-convex case and to \cite{P} in the $k$-subharmonic case. Our lemma can also be generalized to work on $\mathbb P^{n-1}\times\mathbb P^1$ equipped with the Fubini-Study product metric and thus provides examples in the case of compact K\"ahler manifolds. Below we provide the full details.

\subsection{Examples in the real setting}
 
In this subsection we fix $1<k\leq n$. We will work in the unit ball $B=B(0,1)$ in $\mathbb R^n$. The following lemma is crucial for our construction.

\begin{lemma}\label{minimum}
Let  $u$ be a continuous $k$-convex function on $B$ which is constant on $\pa B$ and it depends only on $|x_n|$ and $|x'|$.
Assume that  $F=S_k(D^2u)$ is (weakly) decreasing with respect to $x_n$. Then  $u$ is weakly increasing with respect to $|x_n|$.
In particular,
\begin{equation}\label{min}
\inf_Bu=u(0).
\end{equation}
\end{lemma}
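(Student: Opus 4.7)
The plan is to run a moving-plane argument along the $x_n$-direction and then invoke the comparison principle for $k$-convex functions (the real analogue of Theorem \ref{compprin}, which is valid by \cite{W2}). For each $t\in(0,1)$, I will set $B_t^+:=B\cap\{x_n>t\}$ and work with the reflected function $v_t(x',x_n):=u(x',2t-x_n)$. A direct computation shows $|(x',2t-x_n)|^2=1-4t(x_n-t)<1$ whenever $(x',x_n)\in\partial B\cap\{x_n>t\}$, so $v_t$ is well defined on $\overline{B_t^+}$; it is again $k$-convex because reflection across a hyperplane preserves the spectrum of $D^2u$.

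The core of the proof will be to verify both $S_k(D^2v_t)\geq S_k(D^2u)$ on $B_t^+$ and $v_t\leq u$ on $\partial B_t^+$. For the Hessian inequality I would first observe that the symmetries of $u$ force $F=S_k(D^2u)$ to be even in $x_n$, so the one-sided hypothesis that $F$ is weakly decreasing in $x_n$ is equivalent to $F$ being weakly decreasing in $|x_n|$. For $x_n>t>0$, an elementary case split gives $|2t-x_n|<x_n$, hence
\[
S_k(D^2v_t)(x)=F(x',|2t-x_n|)\geq F(x',x_n)=S_k(D^2u)(x).
\]
For the boundary inequality, on the flat piece $\{x_n=t\}\cap B$ I have $v_t=u$ by construction; on the spherical piece $\partial B\cap\{x_n>t\}$, $u$ equals the constant boundary value $c$, and since $k$-convexity implies subharmonicity ($S_1(D^2u)\geq 0$), the maximum principle gives $u\leq c$ throughout $B$, so $v_t(x)=u(x',2t-x_n)\leq c=u(x)$ there. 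Applying the comparison principle then yields $v_t\leq u$ on all of $B_t^+$.

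To conclude, for $0\leq s<t<1$ with $(x',t)\in B$, evaluating this inequality at $(x',t)$ with the choice $t_0=(s+t)/2$ gives $u(x',s)=v_{t_0}(x',t)\leq u(x',t)$; combined with the evenness of $u$ in $x_n$, this proves weak monotonicity of $u$ in $|x_n|$. For the ``in particular'' statement, the inequality $u(x',x_n)\geq u(x',0)$ reduces $\inf_B u$ to the infimum on the slice $\{x_n=0\}$, and the automatic weak monotonicity in $|x'|$ for a $k$-convex function radial in $x'$ (highlighted in the discussion preceding the lemma) then gives $u(x',0)\geq u(0,0)=u(0)$. I expect the main obstacle to be setting up the Hessian comparison correctly---in particular using the even symmetry of $F$ in $x_n$ to make sense of the one-sided decrease hypothesis, and checking that the reflected point stays inside $B$---after which the standard comparison principle and the $|x'|$-monotonicity observation finish the job.
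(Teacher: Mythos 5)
Your proof is correct, and it takes a genuinely different route from the paper. You run a clean moving-plane (reflection) argument: for $t>0$ you reflect $u$ across the hyperplane $\{x_n=t\}$ to get $v_t$, check that $v_t$ is again $k$-convex (orthogonal conjugation preserves the spectrum of the Hessian), that $S_k(D^2v_t)\ge S_k(D^2u)$ on the half-ball $B^+_t=B\cap\{x_n>t\}$ (using evenness of $F$ and that $|2t-x_n|<x_n$ there), and that $v_t\le u$ on $\partial B^+_t$ (equality on the flat face, and $v_t\le\sup_{\partial B}u=u$ on the spherical cap via the maximum principle for the subharmonic $u$); one application of the comparison principle then gives $v_t\le u$, and evaluating at the right points yields the monotonicity. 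The paper instead uses a \emph{sliding} argument: it compares $u$ with the translate $u(x',x_n+\delta)$ plus a quadratic perturbation $2\varepsilon|x|^2$, works on the more awkward lens-shaped region $S_\delta=\{x_n>-\delta/2,\ x\in B,\ (x',x_n+\delta)\in B\}$, argues by contradiction about the smallest bad $\delta$, treats the part of the boundary with $-\delta/2<x_n<0$ separately using the evenness of $u$, and then needs a second comparison-principle step with an auxiliary strictly subsolution to upgrade the weak inequality to a strict one. Your reflection argument avoids the $\varepsilon$-regularization, the $\delta_0$-contradiction scheme, and the negative-$x_n$ boundary bookkeeping entirely, because $B^+_t$ sits wholly in $\{x_n>0\}$ and the boundary comparison is immediate; the cost is that it is tailored to the ball's reflection symmetry, whereas the sliding argument is more readily adaptable to other geometries. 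The only point you treat lightly --- that the one-sided hypothesis ``$F$ decreasing in $x_n$'' must really mean ``decreasing in $|x_n|$'' given that $F$ is even --- is also implicit in the paper's proof, and your reading of it matches how the hypothesis is used there. The ``in particular'' claim at the end is handled by both you and the paper by invoking the known monotonicity in $|x'|$ for radial $k$-convex functions, so your treatment is on par with the paper's.
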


\begin{proof}
 Observe that for for $k=n$ this follows simply from the convexity of $u$, but for $k<n$ we have to work harder. 
 
 Note that $u$ is radially invariant in the $x_n$ direction, it suffices to prove that for each $x'\in\mathbb R^{n-1}, |x'|<1$ the function $t\rightarrow u(x',t)$ is increasing on the interval $(0,\sqrt{1-|x'|^2})$. For any $\varepsilon,\delta>0$, we define 
 \[
 v_\varepsilon(x', x_n)=u(x', x_n)+2\varepsilon|x|^2
 \]
and
\[
w_{\varepsilon,\delta}(x',x_n)=u(x',x_n+\delta)+4\varepsilon.
\]
Then, our goal is to show that $v_{\varepsilon}(x', x_n) \leq w_{\varepsilon,\delta}(x',x_n)$ for any $(x', x_n)\in B$ such that $(x', x_n + \delta) \in B$ and $x_n>0$. If this holds, taking $\varepsilon \rightarrow 0$, we obtain 
\[
u(x', x_n) \leq u(x', x_n+ \delta)
\]
for any $\delta>0$ and $0<x_n< x_n + \delta < \sqrt{1- |x'|^2}$.

\smallskip

To obtain the desired inequality, we first observe that, using the assumption that $S_k(D^2u)$ is (weakly) decreasing with respect to $x_n$, we have
\begin{equation}\label{h43}
S_k(D^2(v_\varepsilon-\varepsilon|x|^2))=S_k (D^2 u(x', x_n) + 2\varepsilon I_n) > S_k (D^2 u(x', x_n)) \geq S_k(D^2 w_{\varepsilon,\delta}) 
\end{equation}
 on $S_\delta=\{(x',x_n):x_n>-\delta/2,(x',x_n+\delta)\in B, (x',x_n)\in B\}.$

\smallskip

Moreover, we note that for any $\varepsilon>0$ one has 
\begin{equation}\label{delta0}
\lim_{\delta\rightarrow 0^+}w_{\varepsilon,\delta}(x)=u(x)+4\varepsilon>v_{\varepsilon}.
\end{equation}
Now, we fix a small $\varepsilon>0$ and suppose that there is a $\delta$ such that 
\[
v_\varepsilon(p)\geq w_{\varepsilon,\delta}(p)
\]
for some point $p\in \bar S_{\delta}\cap\{x_n\geq 0\}$. Because of (\ref{delta0}), we know there must exist the smallest such $\delta$, and we denote it by $\delta_0$. Then, there is a $p_0\in \bar S_{\delta_0}\cap\{x_n\geq 0\}$ such that $v_\varepsilon(p_0)\geq w_{\varepsilon,\delta_0}(p_0)$.

\noindent For $(x', x_n) \in \pa S_{\delta_0}\cap\{x_n\geq 0\}$ (in fact even on $\pa S_{\delta_0}\cap\{x_n>-\delta_0/2\}$), we know $(x', x_n + \delta_0) \in \pa B$. Therefore, 
\[
w_{\varepsilon, \delta_0}(x', x_n) = u(x', x_n+\delta_0) + 4\varepsilon = \max_B \, u + 4\varepsilon.
\]
Here we used that fact that $u$ is $k$-convex and equals to constant on $\pa B$ and hence $u$ attains its maximum on $\pa B$. Recalling the definition of $v_{\varepsilon}$, we have
\begin{equation}\label{boundary1}
w_{\varepsilon, \delta_0}(x) > v_{\varepsilon}(x),  \ \textit{ \rm for } x\in \pa S_{\delta_0}\cap\{x_n\geq 0\}.
\end{equation}
In particular, this implies $p_0 \in S_{\delta_0}\cap\{x_n\geq 0\}$.

\smallskip

On the other hand, for any point $q=(q',q_n)\in S_{\delta_0}\setminus\{x_n\geq 0\}$. We compute 
\begin{equation}\label{negative}
w_{\varepsilon,\delta_0}(q)=w_{\varepsilon,\delta_0+2q_n}(q',-q_n)>v_\varepsilon(q',-q_n)=v_\varepsilon(q),
\end{equation}
where we used the minimality of $\delta_0$ and  negativity of $q_n>-\delta_0/2$. Therefore, we have
\begin{equation}\label{boundary2}
w_{\varepsilon, \delta_0}(x) \geq v_{\varepsilon}(x),  \ \textit{ \rm for } x\in S_{\delta_0}\cap\{x_n=0\}.
\end{equation}
By (\ref{h43}) and the maximum principle applied to the domain $ S_{\delta_0}\cap\{x_n> 0\}$ we obtain
\[
w_{\varepsilon,\delta_0}\geq v_\varepsilon\; \hbox{ on } \; S_{\delta_0}\cap\{x_n> 0\}.
\] 
This together with (\ref{negative}) implies 
\begin{equation}\label{geq}
w_{\varepsilon,\delta_0}(x)\geq v_\varepsilon (x) \ \ \textit{ \rm for } \forall x \in S_{\delta_0}.
\end{equation}

\smallskip

Next, we want to show that the above inequality is indeed a strict inequality. For this, we consider the open set
\[
D=\left\{x\in S_{\delta_0}:v_\varepsilon(x)-\varepsilon\, |x-p_0|^2+\varepsilon\, {\rm dist}(p_0,\pa S_{\delta_0})^2/4>w_{\varepsilon,\delta_0}(x)\right\}.
\]
Then, $D\subset\subset S_{\delta_0}$ and it holds
\[
v_\varepsilon(x)-\varepsilon\,|x-p_0|^2+\varepsilon\,{\rm dist}(p_0,\pa S_{\delta_0})^2/4>w_{\varepsilon,\delta_0}(x)\hbox{ in }D
\]
and
\[
v_\varepsilon(x)-\varepsilon\,|x-p_0|^2+\varepsilon\,{\rm dist}(p_0,\pa S_{\delta_0})^2/4=w_{\varepsilon,\delta_0}(x)\hbox{ on } \pa D.
\]
On the other hand, we also have, on $D$,
\[
S_k(D^2(v_\varepsilon(x)-\varepsilon\,|x-p_0|^2+\varepsilon\,{\rm dist}(p_0,\pa S_{\delta_0})^2/4))=S_k(D^2(v_\varepsilon-\varepsilon\,|x|^2))>S_k(D^2w_{\varepsilon,\delta_0}).
\]
Then, we have contradiction by using the comparison principle (Theorem \ref{compprin}). Therefore, we must have
\begin{equation}\label{sgeq}
w_{\varepsilon,\delta_0}(x)> v_\varepsilon (x) \ \ \textit{ \rm for } \forall x \in S_{\delta_0}.
\end{equation}
This contradicts with the choice of $\delta_0$. Therefore, we obtain 
\[
v_\varepsilon(x) < w_{\varepsilon,\delta}(x) \ \ \textit{\rm on } S_{\delta}\cap\{x_n\geq 0\}.
\]
for any $\delta>0$. Letting $\varepsilon \rightarrow 0$, we get that $u$ is weakly increasing in $|x_n|$.
(\ref{min}) follows from the sub-harmonicity of $u$ with respect to $x'$. 
\end{proof}

\medskip

Given Lemma \ref{minimum}, the construction of the example and its justification follow closely the argument in \cite{W1}. We provide the details for the sake of completeness. Let
\begin{eqnarray}\label{defeta}
\eta(t) = \begin{cases}
\exp \left(-1/(1-t^2) \right) \ \ &t<1\\
0 & t\geq 1
\end{cases}
\end{eqnarray}
For $a,b\in\mathbb{R}, a>1$ define 
\begin{equation}\label{defF}
F(x)=\eta\left({|x_n|\over |x'|^a}\right)\, |x'|^b.
\end{equation}

\medskip

\begin{example}\label{Wang}
If $0<b< 2(k-1)(a-1)$, then the k-convex solution $u$ of the Dirichlet problem
\begin{equation}\label{KrylovHMM}
 \begin{cases}
  S_k(D^2u)=F(x) \  &\hbox{ in } B\\
  u=0 & \hbox{ on } \pa B
 \end{cases}
\end{equation} 
is not $\mathcal{C}^{1,1}$ in any neighbourhood of $0$. Furthermore, $F^\gamma\in\mathcal{C}^{1,1}(B)$ for $\gamma>\frac{1}{k-1}+\frac{1}{(a-1)(k-1)}$. In particular, taking $a\rightarrow\infty$ we obtain that no exponent larger than $1/(k-1)$ could yield $\mathcal C^{1,1}$ solutions in general.
  \end{example}
\begin{proof}

The comparison principle implies that  the solution is unique. Because of the rotational invariance of the data the solution has to 
depend only on $|x'|$ and $|x_n|$, i.e. it has to be radial both in the $x'$ and the $x_n$ direction. By Lemma \ref{minimum} it is increasing 
separately in $|x'|$ and in $|x_n|$. 

Let $\varepsilon>0$ be such that $\varepsilon^2+\varepsilon^{2/a}<1$. 
Define the domain $$P=\{(x',x_n):|x'|<\varepsilon^{1/a},|x_n|<\varepsilon\}$$
and the function 
\[
v(x)=\left(\frac{x_1-\frac{1}{2}\varepsilon^{1/a}}{\frac{1}{4}\varepsilon^{1/a}}\right)^2
+\sum_{k=2}^{n-1}\left(\frac{x_k}{\frac{1}{2}\varepsilon^{1/a}}\right)^2
+\left(\frac{x_n}{\frac{1}{4^{a+1}}\varepsilon}\right)^2-1.
\] 
By computation we have
\[
E:=\left\{x\in B:v<0\right\}\subset P.
\]
On the other hand
\[
\inf_EF\geq\eta(1/4)4^{-b}\varepsilon^{b/a}.
\]
Observe also that for some positive constant $c_1$ (independent on $\varepsilon$) the following inequality holds 
\[
S_k\left(D^2v\right)\leq c_1 \varepsilon^{-2-2(k-1)/a}.
\]
Then it is possible to choose another constant $c_2$ (also independent on $\varepsilon$) such that 
\[
S_k(D^2(c_2\,\varepsilon^{\frac{2a+2(k-1)+b}{ka}}\, v+\sup_Pu))\leq \inf_EF.
\]
By the comparison principle 
\[
c_2\, \varepsilon^{2a+2(k-1)+b\over ka}\, v+\sup_Pu\geq u,\  \ \textit{\rm  on } P 
\]
and we obtain 
\[
u(0)\leq u(\frac{1}{2}\varepsilon^{1/a},0,\ldots,0)\leq \sup_P u-c_2\,\varepsilon^{\frac{2a+2(k-1)+b}{ka}}=u(\varepsilon^{1/a},0,\ldots,0,\varepsilon)-c_2\,\varepsilon^{\frac{2a+2(k-1)+b}{ka}}.
\]
For the last equality, we used the fact that $u$ obtains its maximum on $\pa P$ since $u$ is radial and increasing in both $x'$
and $x_n$ directions.
Denote 
\[
s(t)=u(0,t), \ \  \textit{ \rm and }\ \ell(t)=u(t^{1/a},0,\ldots,0,t) 
\]
% and
%\begin{eqnarray}
%\ell(t)=\begin{cases}  
%  u(t^{1/a},0,\ldots,0,t) & \hbox{ if } t^2+t^{2/a}<1\\
%  0 &\hbox{ if } t^2+t^{2/a}\geq1.
% \end{cases}
% \end{eqnarray}
for $t\in[0,1]$. We clearly have $s\leq \ell$ since $u$ is increasing in the $|x'|$ direction.
 
 \smallskip
 Assume that $s<\ell$ on some interval  $(c,d)$. Then, for any $\varepsilon_1,\varepsilon_2\in(c,d)$ with $\varepsilon_2-\varepsilon_1>0$ small enough, we can find an affine function $w$ 
 dependent only on $x_n$, such that $w(x', t) < \ell(t)$ for any $t\in (\varepsilon_1, \varepsilon_2)$ and
\[
w(0', \varepsilon_1)= u(0', \varepsilon_1)= s(\varepsilon_1), \ \ w(0', \varepsilon_2)= u(0', \varepsilon_2)= s(\varepsilon_2).
\]
 Then, by the monotonicity of $u$ in the $|x'|$ direction again, we have 
 \[
 w(x', x_n) \leq u(x', x_n)  \ \textit{ \rm on } \pa(\{F=0\}\cap\{x_n\in(\varepsilon_1,\varepsilon_2)\}).
 \]
On the other hand, by the definition of $F$, we have $S_k(D^2u)=0=S_k(D^2 w)$ on $\{F=0\}\cap\{x_n\in(\varepsilon_1,\varepsilon_2)\}$. Then, the comparison principle implies
\[
w(x', x_n) \leq u(x', x_n)  \ \textit{ \rm in } \{F=0\}\cap\{x_n\in(\varepsilon_1,\varepsilon_2)\}
 \]
for any small interval $(\varepsilon_1,\varepsilon_2)\in(c,d)$. In particular, $w(0', t) \leq u(0', t) = s(t)$ for any $t\in (\varepsilon_1,\varepsilon_2)$. Thus $s(t)$ is weakly concave on $(c, d)$.

\smallskip

Now assume that $u$ is $\mathcal{C}^{1,1}$ in a neighbourhood of $0$. Then $s'(0)=0$. We claim that $s$ cannot be concave in any interval of the type $(0,r)$. Indeed, if $s(t)$ is weakly concave on some interval $(0, r)$, then it follows that $s'(t)\leq 0$ for $t\in (0, r)$. On the other hand, by Lemma \ref{minimum}, we have $s'(t)\geq 0$. Therefore, $s'(t)\equiv 0$ and hence $s(t)$ is constant on $(0, r)$. Taking largest such $r$ (which is strictly less than one for otherwise the function would be globally constant), we have $s(r)<\ell(r)$ as $u$ is not constant in a neighbourhood of zero. But then applying the above argument around the point $r$, we would obtain that $s$ is concave at $r$. This contradicts with the fact that $s$ is constant to the left of $r$ and strictly increases to the right of $r$. This proves the claim.

Then, it follows that the strict inequality $s(t) < \ell(t)$ can not hold in any interval of the type $(0,r)$. Thus there is a sequence $\varepsilon_m$ decreasing to $0$ such that 
\[
u(0,\varepsilon_m)=s(\varepsilon_m)=l(\varepsilon_m)\geq u(0)+c_2\,\varepsilon_m^{\frac{2a+2(k-1)+b}{ka}}
\]
and we can conclude
\[
u(0,\varepsilon_m)-u(0)\geq c_2\,\varepsilon_m^{2-\theta/ak},
\]
where $\theta=2(k-1)(a-1)-b$. This contradicts the assumption that $u\in\mathcal{C}^{1,1}$ around $0$.
\end{proof}

\subsection{Compact K\"ahler manifold case}

Now we deal with the compact K\"ahler manifold case. The construction is similar to the real case and the main technical difficulty is that we have to replace the translation operators with suitable automorphisms of the K\"ahler manifold. These automorphisms will furthermore preserve the K\"ahler form.

\smallskip
\noindent
We fix $1<k\leq n$ in what follows. 
The examples  will be constructed on $\mathbb{P}^{n-1}\times\mathbb{P}^1$ equipped with the product metric $\omega=\omega_{FS}'+\omega_{FS}$ with $\omega_{FS}$ denoting the 
Fubini-Study metrics on each factor. For $z\in\C$ we split the coordinates and write $z=(z',z_n)\in\mathbb{C}^{n-1}\times\mathbb{C}$
which we identify in the usual way as a subset (the affine chart) of $\mathbb{P}^{n-1}\times\mathbb{P}^1$. On this affine chart we have $\omega_{FS}'=i\ddb \left(\frac{1}{2}\log(1+|z'|^2)\right)$ and $\omega_{FS}=i\ddb\left(\frac{1}{2}\log(1+|z_n|^2)\right)$. The following complex analogue of Lemma \ref{minimum} is crucial for the construction.

\begin{lemma}\label{monotonicznosc}
 Let  $\varphi\in\ksh(\mathbb{P}^{n-1}\times\mathbb{P}^1, \om)$ be a continuous function such that $(\om+i\ddb\varphi)^k\we\om^{n-k}=f\, \om^n$. Moreover, assume that
\begin{enumerate}
\item[1).] for any $r>0$ the set $\{(z',z_n)\in\mathbb{C}^{n-1}\times\mathbb{C}: |z_n|\leq r,\ f(z',z_n)=0\}$ is bounded;
\item[2).] $\varphi|_{\C}$ (and hence $f$) depends only on $|z'|$ and $|z_n|$ on the affine chart;
\item[3).] $f(z', z_n)$ is strictly decreasing in $|z_n|$ for all fixed $z'$ such that $f(z',z_n)>0$.
%\item[3).] for $z'\in\mathbb{C}^{n-1}$ and $p,q\in\mathbb{C}$ we have
%\begin{itemize}
%\item[(i).] $|p|\leq|q|$ implies $f(z',p)\geq f(z',q)$;
%\item[(ii).] $|p|<|q|$ and $f(z',p)= f(z',q)$ imply $f(z',p)= 0$.
%\end{itemize} 
\end{enumerate}
 Then the function  $\varphi$  is increasing with respect to   $|z_n|$. 
\end{lemma}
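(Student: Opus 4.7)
\medskip
\noindent\textbf{Proof proposal.} The plan is to mirror the proof of Lemma \ref{minimum}, with two substitutions adapted to the compact K\"ahler setting. First, I would replace the translations $x_n\mapsto x_n+\delta$ by a one-parameter family of Fubini--Study isometries of $\mathbb P^1$ moving $z_n=0$ along a meridian toward $z_n=\infty$. Second, I would replace the Euclidean quadratic $|x|^2$ by $|z|^2$ in the affine chart $\mathbb C^n\subset X$: on any bounded subregion one has $i\ddb|z|^2\geq c\,\om$, so that the perturbation $2\varepsilon|z|^2$ strictly increases the $\sigma_k$-Hessian measure computed with respect to $\om$.

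Concretely, I would fix a one-parameter subgroup $\{h_\delta\}_{\delta\in\mathbb R}\subset\mathrm{Aut}(\mathbb P^1,\omega_{FS})\cong SO(3)$ of rotations about an axis orthogonal to the south--north axis of $S^2$, so that $|h_\delta(0)|$ is strictly increasing in $\delta$ near $0$, and set $\Phi_\delta(z',z_n):=(z',h_\delta(z_n))$. Each $\Phi_\delta$ is a K\"ahler isometry of $(X,\om)$, so $\varphi\circ\Phi_\delta\in\ksh(X,\om)$ solves $(\om+i\ddb(\varphi\circ\Phi_\delta))^k\we\om^{n-k}=(f\circ\Phi_\delta)\,\om^n$. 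Since $\varphi$ is rotationally symmetric in $z_n$ we may also compose with $z_n\mapsto e^{i\alpha}z_n$, and WLOG the open set $U_\delta:=\{|h_\delta(z_n)|>|z_n|\}\subset\mathbb P^1$, which plays the role of $\{x_n>0\}$, is bounded by a great circle. I would then fix a large $\Omega\Subset\mathbb C^n$ in the affine chart and set
\[
v_\varepsilon:=\varphi+2\varepsilon|z|^2,\qquad w_{\varepsilon,\delta}:=\varphi\circ\Phi_\delta+4\varepsilon.
\]
On $\Omega\cap U_\delta\cap\{f>0\}$ one has $(\om+i\ddb v_\varepsilon)^k\we\om^{n-k}>f\om^n\geq(f\circ\Phi_\delta)\om^n=(\om+i\ddb w_{\varepsilon,\delta})^k\we\om^{n-k}$, the first inequality coming from $i\ddb|z|^2\geq c\,\om$ and the second from the strict monotonicity of $f$ in $|z_n|$; assumption (1) guarantees that $\{f=0\}$ is compact in each slice $|z_n|\leq r$ and so does not interfere.

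I would then follow the exact scheme of Lemma \ref{minimum}. Assuming $v_\varepsilon\geq w_{\varepsilon,\delta}$ fails somewhere in $\Omega\cap U_\delta$, one takes the minimal such $\delta_0$ and a contact point $p_0$. The outer boundary $\partial\Omega\cap U_{\delta_0}$ is handled by tuning the additive constant in the perturbation so that $v_\varepsilon<w_{\varepsilon,\delta_0}$ there. The inner boundary $\partial U_{\delta_0}\cap\Omega$ requires a reflection: for a point $q$ slightly on the ``negative side'' of $\partial U_{\delta_0}$, the 1-parameter-group identity $h_{\delta_0}=h_{\delta_0+2t}\circ h_{-2t}$ with a suitable $t<0$, combined with the $S^1$-invariance $\varphi(z',e^{i\alpha}z_n)=\varphi(z',z_n)$, lets me rewrite $w_{\varepsilon,\delta_0}(q)=w_{\varepsilon,\delta_0+2t}(\sigma(q))$ for an appropriate reflected point $\sigma(q)$, and the minimality of $\delta_0$ forces this to exceed $v_\varepsilon$. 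Passing $q\to\partial U_{\delta_0}$ and invoking the comparison principle (Theorem \ref{compprin}, applied after writing $\om=i\ddb h$ in the chart, in its standard extension to the $\om^{n-k}$-weighted operator) yields $v_\varepsilon\leq w_{\varepsilon,\delta_0}$ throughout, and a further perturbation by $\varepsilon|z-p_0|^2$ upgrades this to strict inequality at $p_0$, contradicting the definition of $\delta_0$. Sending $\varepsilon\to 0$ and then $\delta\to 0$ gives $\varphi\leq\varphi\circ\Phi_\delta$ on $U_\delta$, which together with the radial symmetry of $\varphi$ in $z_n$ is exactly the monotonicity in $|z_n|$.

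The hard part will be the curved reflection across $\partial U_{\delta_0}\subset S^2$: the sharp identity $w_{\varepsilon,\delta_0}(q',q_n)=w_{\varepsilon,\delta_0+2q_n}(q',-q_n)$ of the real proof, which used commutativity of translations, is here replaced by an identity that blends the non-abelian group law of $SO(3)$ with the additional $S^1$-symmetry of $\varphi$. Verifying that this identity is strong enough to feed the minimality of $\delta_0$, and that the ``negative side'' of $\partial U_{\delta_0}$ in $S^2$ really maps, under the reflection, into $U_{\delta_0+2t}$, is the delicate point. A secondary technical issue is ensuring that the weighted comparison principle applies across the (possibly interior) degeneracy $\{f=0\}$; hypothesis (1) plays the role analogous to compactness of the coincidence set in the real proof.
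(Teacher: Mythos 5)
Your proposal follows the scheme of Lemma~\ref{minimum} (real domain case) almost verbatim, but the paper's actual proof of Lemma~\ref{monotonicznosc} takes a genuinely different and considerably cleaner route, and your version has gaps that the paper's route avoids.

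The paper compares $\varphi$ directly with $\psi:=\varphi\circ T_\alpha+\varepsilon$, where $T_\alpha$ is the $SO(2)$-rotation of the $\mathbb P^1$ factor, on the set $E$ where this rotation increases $|z_n|$. On $\partial E$ one has $|z_n|=|t_\alpha(z_n)|$, so the radial symmetry of $\varphi$ immediately gives $\varphi<\psi$ there; hence $D:=\{\varphi-\delta>\psi\}\cap E$ is relatively compact in ${\rm int}(E)$ for small $\delta$. The crucial tool is then the \emph{global integral comparison principle} on the compact manifold (Theorem~\ref{compprinkahler}): it yields
\[
\int_D f\,\om^{n}\geq\int_D f\circ T_{\alpha}\,\om^{n}=\int_D (\om+i\ddb\psi)^k\we\om^{n-k}\geq\int_D (\om+i\ddb\varphi)^k\we\om^{n-k}=\int_D f\,\om^{n},
\]
forcing equality; combined with the strict monotonicity of $f$ and $|z_n|<|t_\alpha(z_n)|$ on $D$, this gives $f\equiv 0$ on $D$. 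Assumption~(1) then bounds $D$ so the local comparison principle makes $D$ empty. There is no reflection argument, no minimal-$\delta_0$ construction, and no quadratic perturbation anywhere.

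Your scheme has two concrete problems. First, the perturbation $v_\varepsilon=\varphi+2\varepsilon|z|^2$ vs.\ $w_{\varepsilon,\delta}=\varphi\circ\Phi_\delta+4\varepsilon$: in Lemma~\ref{minimum} the domain is the unit ball, where $2\varepsilon|x|^2\leq 2\varepsilon<4\varepsilon$, so the additive constant dominates the perturbation. On the affine chart $\mathbb C^n\subset\mathbb P^{n-1}\times\mathbb P^1$ the quantity $|z|^2$ is unbounded, so $4\varepsilon$ cannot dominate $2\varepsilon|z|^2$ on any exhaustion $\Omega\Subset\mathbb C^n$ of size larger than $\sqrt2$; ``tuning the additive constant'' does not resolve this because both are tied to the same $\varepsilon$. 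Consequently the outer boundary $\partial\Omega$ cannot be controlled, and you never account for the part of the manifold outside $\Omega$ (including the divisors at infinity), which the paper handles automatically by working with the global comparison principle. Second, you explicitly flag the curved reflection across $\partial U_{\delta_0}\subset S^2$ as the delicate point and leave it unresolved; the paper never needs this step, because the boundary $\partial E$ is precisely the fixed-point locus of the relevant involution and the radial symmetry of $\varphi$ already gives the required boundary inequality without any reflection. In short: the missing idea in your proposal is to replace the domain-wise comparison-plus-reflection scheme by the global $\ksh(X,\om)$ comparison principle (Theorem~\ref{compprinkahler}), which removes both the perturbation and the reflection at a stroke.
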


\begin{proof}
  Denote by $t_\alpha$ and $T_\alpha$ the automorphisms 
 of  $\mathbb{P}^1$ and $\mathbb{P}^{n-1}\times\mathbb{P}^1$ respectively given by
\[
t_{\alpha}\left([w_0:w_1]\right)=[\,\cos(\alpha)w_0-\sin(\alpha)w_1\,:\,\sin(\alpha)w_0+\cos(\alpha)w_1\,];
\] 
\[
T_\alpha\left([z_0:\cdots z_{n-1}]\times[w_0:w_1]\right)=[z_0:\cdots:z_{n-1}]\times t_{\alpha}\left([w_0:w_1]\right).
\]
We would like to point out that $t_{\alpha}$ preserves $\om_{FS}$ while $T_{\alpha}$ preserves the product metric $\om$. Moreover, on the affine chart of $\mathbb P^1$, $t_{\alpha}$ reads
\[
t_\alpha(z)=\frac{z+\tan\alpha}{1-z\tan\alpha}.
\] 
Choose now $\varepsilon>0$ and fix an angle $\alpha\in(0,\frac{\pi}{4}]$.
 Let $W=\{z\in\mathbb{C}:{\rm Re}\, z\geq0\}\cup\{\infty\}\subset\mathbb{P}^1$ and $ E=T_{\alpha\over 2}^{-1}(\mathbb{P}^{n-1}\times W)$. For $(z',z_n)\in {\rm int}(E)$ 
 we have 
 \begin{equation}\label{Ealpha}
 t_\alpha(z_n)=\infty\ \mbox{ or } \ |z_n|<|t_\alpha(z_n)|.
 \end{equation}
Let $\psi:\mathbb{P}^{n-1}\times\mathbb{P}^1\rightarrow\mathbb{R}$ be a continuous function given by
% $\delta\in$% $$ and let
 \[
 \psi(z',z_n)=(\varphi\circ T_\alpha)(z',z_n)+\varepsilon.
 \]
 For $z\in\partial  E$, we have $|z_n|= |t_{\alpha}(z_n)|$ and hence $\varphi(z)=\varphi\left(T_{\alpha}(z)\right)<\psi(z)$.
 Thus, for any $\delta>0$ small enough, the set 
 \[
 D:=\{\varphi-\delta>\psi\}\cap E
 \] is relatively compact in ${\rm int} (E)$. 
 The monotonicity properties of $f$ imply that
\begin{equation*}
f(z)\geq f(T_{\alpha}(z)) \mbox{ for } z\in\overline{E}.
\end{equation*}
The comparison principle (\ref{compprinkahler}) results in 
\[
\int_D f\om^{n}\geq\int_D f\circ T_{\alpha}\,\om^{n}=\int_D (\om+i\ddb\psi)^k\we\om^{n-k}\geq\int_D (\om+i\ddb\varphi)^k\we\om^{n-k}=\int_D f\om^{n}.
\]
Together with assumption (3) and (\ref{Ealpha}), this gives us $f=0$ on $D$. We wish to point out that, contrary to the local setting, we cannot deduct
the emptiness of $D$ at this stage since we do not know whether $D$ is contained in some affine chart. To this end we use assumption (1). Note that the
projection of $E$ onto the $\mathbb P^1$ factor is a bounded subset of the affine chart. 
By assumption (1) we get that $D$ is bounded. Then the comparison principle for bounded domains implies that $D$ is empty. 

\end{proof}

\medskip

%%%%%%%%%%%%%%%%%%%%%%%%%%%%%%%%%%%%%%%

Let $\eta$ be as in the real case.
For $a\geq1,b\in\mathbb{R}$ and $z\in\C$, define
\begin{eqnarray}\label{define-f}
f(z)=A\, \exp(-|z|^2)\,\eta\left({|z_n|\over |z'|^a}\right)|z'|^b
\end{eqnarray}
and extend $f$ by zero on the divisors of infinity so that $f$ is a function on $\mathbb P^{n-1}\times\mathbb P^1$. Here, $\eta(t)$ is given in (\ref{defeta}) and the constant $A>0$ is chosen such that 
\begin{equation}\label{normalization}
\int_{\mathbb{P}^{n-1}\times\mathbb{P}^1}f\omega^n=\int_{\mathbb{P}^{n-1}\times\mathbb{P}^1}\omega^n.
\end{equation}
\begin{lemma}\label{inequality}
If $\varphi\in\ksh(\mathbb{P}^{n-1}\times\mathbb{P}^1,\om)$ is the unique continuous solution to the equation
\begin{equation}\label{DPcn}
 (\omega+i\ddb\varphi)^k\wedge\om^{n-k}=f\omega^n 
\end{equation} 
%\begin{equation}\label{DPcn}
% \begin{cases}
% (\omega+i\ddb\varphi)^k\wedge\om^{n-k}=f\omega^n \\
%  \varphi(0)=0,
% \end{cases}
%\end{equation} 
on $\mathbb{P}^{n-1}\times\mathbb{P}^1$, satisfying $\varphi(0)=0$. Then there exist a constant $c>0$ and a sequence $\varepsilon_m>0$ which decreases to $0$, such that
\begin{equation}\label{nierreg}
u(0,\varepsilon_m)\geq c\,\varepsilon_m^\theta,
\end{equation} 
where $\theta=\frac{2a+2k-2+b}{ka}$ and $u=\varphi|_{\C}+\frac{1}{2}\log(1+|z'|^2)+{1\over 2}\log(1+|z_n|^2)$.
  \end{lemma}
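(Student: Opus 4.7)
The plan is to mimic the real-case proof of Example \ref{Wang} in the compact K\"ahler setting, with Lemma \ref{monotonicznosc} replacing Lemma \ref{minimum}. I proceed in three stages: establish $|z_n|$-monotonicity of $u$; run a local barrier comparison on a small box around the origin; then transfer the barrier estimate to the $z_n$-axis via a concavity argument.

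For the monotonicity, uniqueness of $\varphi$ together with the $U(n-1)\times U(1)$-invariance of $f$ forces $\varphi$, hence $u$, to depend only on $|z'|$ and $|z_n|$ on the affine chart. To invoke Lemma \ref{monotonicznosc} one checks: the zero-set of $f$ intersected with $\{|z_n|\leq r\}$ is contained in the bounded set $\{|z'|\leq r^{1/a}\}$ (because $\eta(|z_n|/|z'|^a)=0$ forces $|z_n|\geq|z'|^a$); $f$ is a function of $(|z'|,|z_n|)$ by construction; and wherever $f>0$, both $e^{-|z|^2}$ and $\eta(|z_n|/|z'|^a)$ strictly decrease in $|z_n|$. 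Lemma \ref{monotonicznosc} then gives $\varphi$, and hence $u$, weakly increasing in $|z_n|$.

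For the barrier, work on the affine chart, where the equation reads $(i\ddb u)^k\wedge\om^{n-k}=f\om^n$. Near the origin $\om$ is uniformly comparable to the flat form $\beta$, so the problem reduces (up to bounded multiplicative constants) to the standard local Hessian equation $\sigma_k(i\ddb u)\sim f$. Following Example \ref{Wang}, set $P_\varepsilon=\{|z'|<\varepsilon^{1/a},\,|z_n|<\varepsilon\}$ and
\[
v(z)=\frac{|z_1-\tfrac12\varepsilon^{1/a}|^2}{(\tfrac14\varepsilon^{1/a})^2}+\sum_{j=2}^{n-1}\frac{|z_j|^2}{(\tfrac12\varepsilon^{1/a})^2}+\frac{|z_n|^2}{(\tfrac1{4^{a+1}}\varepsilon)^2}-1.
\]
Direct calculation gives $E=\{v<0\}\subset P_\varepsilon$, $\inf_E f\geq c\,\varepsilon^{b/a}$, and $\sigma_k(i\ddb v)\leq C\,\varepsilon^{-2-2(k-1)/a}$. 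Choosing $c_2$ so that $c_2\varepsilon^\theta v+\sup_{P_\varepsilon}u$ (with $\theta=(2a+2(k-1)+b)/(ka)$) has complex $k$-Hessian bounded above by $\inf_E f$, the comparison principle (Theorem \ref{compprin}, applied in local coordinates) yields $u\leq c_2\varepsilon^\theta v+\sup_{P_\varepsilon}u$ on $E$; evaluating at $z_0=(\tfrac12\varepsilon^{1/a},0,\ldots,0)$ where $v\leq-1$ gives $u(z_0)\leq\sup_{P_\varepsilon}u-c_2\varepsilon^\theta$. The $|z_n|$-monotonicity identifies $\sup_{P_\varepsilon}u=\sup_{|z'|\leq\varepsilon^{1/a}}u(z',\varepsilon)=:\ell(\varepsilon)$.

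Finally, transfer this bound to the $z_n$-axis. Set $s(t)=u(0,t)$: nondecreasing with $s(0)=0$, and trivially $s\leq\ell$. If $s<\ell$ throughout some $(0,r_0)$, pick $\varepsilon_1<\varepsilon_2$ in $(0,r_0)$ and a pluriharmonic test $w$ depending only on $z_n$ with $w(0,\varepsilon_i)=s(\varepsilon_i)$ and $w<\ell$ on $|z_n|\in(\varepsilon_1,\varepsilon_2)$; applying the comparison principle to $u$ and $w$ on $\{f=0\}\cap\{\varepsilon_1<|z_n|<\varepsilon_2\}$ (whose boundary values of $u$ are controlled by the definition of $\ell$ and by $|z_n|$-monotonicity) yields $w\leq u$ inside, hence $s\geq w|_{z'=0}$, so $s$ is weakly concave on $(0,r_0)$. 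Combining concavity with $s(0)=0$ and $s\not\equiv 0$ (monotonicity gives $\varphi(0,t)\geq 0$ and the Fubini--Study potential is strictly positive for $t>0$) rules out the scenario $s<\ell$ persisting near $0$, so the coincidence set $\{s=\ell\}$ accumulates at $0$ and produces a sequence $\varepsilon_m\downarrow 0$ with $u(0,\varepsilon_m)=s(\varepsilon_m)=\ell(\varepsilon_m)\geq c\,\varepsilon_m^\theta$. The principal obstacle is precisely this concavity step: the real argument of Example \ref{Wang} leaned on $|x'|$-monotonicity of $u$ to control the horizontal boundary pieces in the comparison, but the factor $|z'|^b$ in $f$ destroys any simple $|z'|$-monotonicity in the K\"ahler setting, so the test $w$ must be chosen to depend mildly on $z'$ as well, leveraging the radial structure of $u$ and the already-established $|z_n|$-monotonicity.
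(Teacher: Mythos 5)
Your monotonicity and barrier steps track the paper's proof closely and are fine, but the transfer-to-axis step has a genuine gap, and the difficulty you flag at the end is not the actual one. You parametrize $s(t)=u(0,t)$ and then claim that weak concavity of $s$ on an interval $(0,r_0)$ together with $s(0)=0$ and $s\not\equiv0$ is contradictory; it is not (e.g.\ $s(t)=\sqrt t$ is concave, increasing, vanishes at $0$, and is not identically zero), so your argument does not rule out $s<\ell$ near $0$ and never produces the sequence $\varepsilon_m$. Moreover, the comparison step itself forces the logarithmic scale: the only radial pluriharmonic competitors depending on $z_n$ alone are $\alpha\log|z_n|+\beta$, so what you actually prove is concavity of $t\mapsto u(0,e^t)$, not of $t\mapsto u(0,t)$. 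This is exactly what the paper exploits: with $s(t)=u(0,e^t)$ and $\ell(t)=u(e^{t/a},0,\ldots,0,e^t)$, if $s<\ell$ on an interval then $s$ is weakly concave there, and since $s$ is strictly increasing with $\lim_{t\to-\infty}s(t)=0$, $s$ cannot be concave on any half-line $(-\infty,r)$ (a concave, nondecreasing, lower-bounded function on such a half-line must be constant). Hence the coincidence set $\{s=\ell\}$ is unbounded below, yielding $t_m\to-\infty$ and $\varepsilon_m=e^{t_m}\searrow 0$ with $u(0,\varepsilon_m)=\ell(\varepsilon_m)\geq c\,\varepsilon_m^\theta$ from the barrier bound.

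Separately, the ``obstacle'' you identify --- that $|z'|$-monotonicity is lost --- is not there. The function $u$ is $k$-subharmonic in the affine chart and, for fixed $z_n$, its restriction to $\{z_n=\mathrm{const}\}$ is $(k-1)$-subharmonic, hence subharmonic for $k\geq 2$; being radial in $z'$, it is therefore nondecreasing in $|z'|$, exactly as in the real case (and exactly as the paper asserts). This $|z'|$-monotonicity is what controls the boundary pieces $\{|z_n|=\varepsilon_i\}$ in the comparison, while the side pieces $\{|z_n|=|z'|^a\}$ are controlled by $u\geq\ell$. There is no need to let the test function $w$ depend on $z'$; $w=\alpha\log|z_n|+\beta$ suffices.
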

\begin{proof}
We remark that the solution is unique (uniqueness for normalized solutions to complex Hessian equations holds in much greater generality that we need here, see for example \cite{DC}). Just as in the real case this implies that it depends only on $|z'|$ and $|z_n|$ and thus by definition $u$ depends only on  $|z'|$ and $|z_n|$ too. By sub-harmonicity $u$ is increasing with respect to $|z'|$ and by Lemma \ref{monotonicznosc} it is strictly increasing in  $|z_n|$ as the function $f$ satisfies all the assumptions in that lemma. 

\smallskip
 From now on, we restrict our attention to the affine chart.
Let $\varepsilon>0$ be such that $\varepsilon^2+\varepsilon^{2/a}\leq1$. 
%Constants $c_1,c_2$ will not depend on $\varepsilon$.
Let
 \[
 P=\{(z',z_n):|z'|<\varepsilon^{1/a}, |z_n|<\varepsilon\}
 \]
and
 \[
 v(x)=\left(\frac{{\rm Re}\,z_1-\frac{1}{2}\varepsilon^{1/a}}{\frac{1}{4}\varepsilon^{1/a}}\right)^2+\left(\frac{{\rm Im}\,z_1}{\frac{1}{4}\varepsilon^{1/a}}\right)^2+\sum_{k=2}^{n-1}\left(\frac{|z_k|}{\frac{1}{4}\varepsilon^{1/a}}\right)^2+\left(\frac{|z_n|}{\frac{1}{4^{a+1}}\varepsilon}\right)^2.
 \]
  Then 
\[
E=\left\{x\in B:v<1\right\}\subset P.
\]
We have $$\inf_Ef\geq \exp(-2)\,\eta(1/4)\,4^{-b}\,\varepsilon^{b/a}$$
and we can choose an absolute constant $c_1$  such that 
\[
(i\ddb v)^k\wedge\om^{n-k}\leq c_1 \varepsilon^{-2-2(k-1)/a}\om^n \mbox{ on } P.
\]
Then it is possible to choose a constant $c_2$ independent on $\varepsilon$ such that 
\[
\left(i\ddb(\varepsilon^{\frac{2a+2(k-1)+b}{ka}}v)\right)^k\wedge\om^{n-k}\leq c_2f\,\om^n\ \mbox{ on } E.
\]
By the comparison principle  we obtain 
\begin{equation}\label{nierzzporownawczej}
 u(\varepsilon^{1/a},0,\ldots,0,\varepsilon)= \sup_P u\geq\sup_E u\geq\inf_{\partial E}
 c_2\,\varepsilon^{\frac{2a+2(k-1)+b}{ka}}v=c_2\,\varepsilon^{\frac{2a+2(k-1)+b}{ka}}.
\end{equation}
For $t\in\mathbb{R}$, we define
\[
s(t)=u(0,e^t), \ \textit{ \rm and } \ell(t)=u(e^{t/a},0,\ldots,0,e^t).
 \]
 We have $s\leq \ell$. Follow the same argument as in the real case, we can obtain that if $s<\ell$ on some interval $(c, d)$ then $s$ is weakly concave on $(c, d)$. 
%  Indeed, if $s<\ell$ on $(c, d)$, we know that for any small interval $(\varepsilon_1, \varepsilon_2)\subset (c, d)$, there exists an affine function $w:\mathbb{R}\rightarrow\mathbb{R}$ such that $w<\ell$ on $(\varepsilon_1, \varepsilon_2)$, and $w(\varepsilon_1)=s(\varepsilon_1)$ and $w(\varepsilon_2)=s(\varepsilon_2)$. But this implies that there is a connected component 
% of the set $\{w(\log|z_n|)>u(z',z_n)\}$ which is contained in the set $(i\ddb u)^k\we\om^{n-k}=0$ (it is the
% connected component containing the point $(0, {1\over 2}(e^{\varepsilon_1}+e^{\varepsilon_2})$). 
% This is impossible because $u$ is maximal there and $\log|z_n|$ is $m$-subharmonic.
However, we also know that $s$ is strictly increasing and
 \[
\lim_{t\to-\infty}s(t)=0,
\]
and this imply $s$ can not be weakly concave in any interval $(-\infty, r)$. Therefore, there is a sequence
 $t_{m}\searrow-\infty$ such that $s(t_m)=\ell(t_m)$. 
Taking $\varepsilon_m=e^{t_m}$ and using (\ref{nierzzporownawczej}) we obtain the Lemma. 
\end{proof}

%%%%%%%%%%%%%%%%%%%%%%%%%%
\medskip

%\subsection{Examples with weaker right hand side}

Observe that in the argument above the parameter $b$ can be negative and then the right hand side is merely $L^p$ integrable. In such a case a result
from \cite{DK1} shows that local solutions are bounded for $L^p$ integrable right hand side provided $p>\frac nk$. It is natural to ask the following question:
\begin{question}
Consider the $k$-Hessian equation on a compact K\"ahler manifold $(X, \omega)$
\[
(\omega+ i\ddb u)^k\wedge\omega^{n-k} = f(z) \omega^n
\]
with $0\leq f\in L^p(X)$ satisfying the normalization condition $\int_X f\, \omega^n = \int_X \omega^n$. What is the best possible regularity one can expect for the solution $u$?
\end{question}
We can also ask similar question with the condition on $f$ being replaced by $f\in C^{0, \delta}$ for some $0<\delta <1$. Indeed, by varying the parameters $a$ and $b$ in the example provided in Lemma \ref{inequality}, we have some assertions about what kind of regularity one can expect under different conditions of the right hand side function.

\begin{example}\label{general-example}$\\$
{\rm 
(1). Let $b=-{2a\over p}+2$ with $p>1$. Then $f\in L^p$. (In fact, any $b>2a/p-2(n-1)/p$ yields $L^p$ right hand side). But in (\ref{nierreg})
\[
\theta=\frac{2}{k}\left(1-\frac{1}{p}\right)+2\frac{1}{a}=\frac{2}{k}\left(1-\frac{1}{p}\right)+O\left({1\over a}\right) \mbox{ as } a\rightarrow+\infty.
\]
This shows that we cannot get better than H\"{o}lder regularity for $\varphi$. Moreover, the H\"older exponent can be at most $\frac{2}{k}\left(1-\frac{1}{p}\right)$.

\noindent (2). Similarly, for $b=2$, we have $f\in \mathcal{C}^{0,\delta}$ for some small $\delta>0$ and 
\[
\theta\leq\frac{2}{k}+\frac{2}{a}.
\]
\noindent (3). For $k\geq 3$ and $b=(k-2)(a-2)-3$, we have $f^\gamma\in\mathcal{C}^{0,1}$ for $\gamma=\frac{a}{(k-2)(a-1)-3}\rightarrow{1\over k-2}$ as $ a\rightarrow+\infty$. We can compute
\[
\theta=1-\frac{1}{ak}.
\]
\noindent (4). For $b=2(k-1)(a-2)$, we have $f^\gamma\in\mathcal{C}^{1,1}$ for $\gamma=\frac{1+2/(a-2)}{k-1} \rightarrow {1\over k-1}$  as $a\rightarrow+\infty.$ In this case
\[
%\beta=2-2\frac{k-1}{ka }.
\theta=2 - 2\left({1\over a} - {1\over ka}\right).
\]
}
To summarize, by varying the parameter $a$ and $b$, the examples imply
\begin{itemize}
\item For $p>1$, $\gamma>\frac{2}{k}(1-\frac{1}{p})$: $f\in L^p\nRightarrow\varphi\in\mathcal{C}^{0,\gamma}$.
\item For $\gamma>0$ there is $\delta=\delta(n,\gamma)$ such that $f\in \mathcal C^{0,\delta}\nRightarrow\varphi\in\mathcal{C}^{1,\gamma}$.
\item For $k\geq 3$, $\gamma>\frac{2}{k}$ there is $\delta=\delta(n,\gamma)$ such that $f\in \mathcal C^{0,\delta}\nRightarrow\varphi\in\mathcal{C}^{0,\gamma}$.
\item For $k\geq 3$, $a>\frac{1}{k-2}$ there is $\gamma<1$: $f^a\in\mathcal{C}^{0,1}\nRightarrow\varphi\in\mathcal{C}^{0,\gamma}$.
\item For  $a>\frac{1}{k-1}$ there is $\gamma<1$: $f^a\in\mathcal{C}^{1,1}\nRightarrow\varphi\in\mathcal{C}^{1,\gamma}$.
\end{itemize}
\end{example}

\subsection{The case of complex Hessian equations in domains}

Finally we mention that in the case of the complex Hessian equation on domains the following examples can be constructed:
\begin{example}\label{complexWang} Let $a, b\in \mathbb R$ be two numbers satisfying $0<b<2(k-1)(a-1)$.  Consider the Dirichlet problem in the unit ball in $\mathbb C^n$
\begin{equation}\label{complexKrylovHMM}
 \begin{cases}
  (i\ddb u)^k\wedge\beta^{n-k}=F  &\hbox{ in } B \\
  u=0 & \hbox{ on } \pa B,
 \end{cases}
\end{equation} 
where the solution $u$ is assumed to be $k$-subharmonic and $F$ is given in (\ref{defF}). Then $u$
is not $\mathcal{C}^{1,1}$ in any neighbourhood of $0$. But, $F^\gamma\in\mathcal{C}^{1,1}(B)$ for any $\gamma>\frac{1}{k-1}+\frac{1}{(a-1)(k-1)}$. In particular,
no exponent larger than $1/(k-1)$ could yield $\mathcal C^{1,1}$ solutions in general.
  \end{example}
 \begin{proof} The proof repeats the previous cases once one establishes an analogue of Lemma \ref{minimum}.
 We leave the details to the Reader.
 \end{proof}

%Address:
%
%S\l awomir Dinew: Institute of Mathematics, Jagiellonian University, ul \L ojasiewicza 6, 30-348 Krak\'ow, Poland\\
%{\tt e-mail: slawomir.dinew@im.uj.edu.pl}\\
%
%Szymon Pli\'s: Institute of Mathematics, Cracow University of Technology, Warszawska 24, 31-155
%    Krak\'{o}w, Poland\\
%{\tt e-mail: splis@pk.edu.pl} \\

\

\bigskip

{\small Institute of Mathematics, Jagiellonian University, ul \L ojasiewicza 6, 30-348 Krak\'ow, Poland}

\smallskip
slawomir.dinew@im.uj.edu.pl

\bigskip
{\small Institute of Mathematics, Cracow University of Tech., Warszawska 24, 31-155 Krak\'{o}w, Poland}

\smallskip
splis@pk.edu.pl

\bigskip

{\small Department of Mathematics, University of California, Irvine, CA 92697, USA}

\smallskip
xiangwen@math.uci.edu


\begin{thebibliography}{99}

{\small


\bibitem{B1} Z. B\l ocki, {\it Regularity of the degenerate Monge-Amp\`ere equation on compact K\"ahler manifolds}, Math. Z.  244 (2003), 153-161.

\bibitem{B2} Z. B\l ocki, {\it Weak solutions to the complex Hessian equation}, Ann. Inst. Fourier (Grenoble) 55 (2005), no. 5, 1735-1756.

\bibitem{B3} Z. B\l ocki, {\it A gradient estimate in the Calabi-Yau theorem}, Math. Ann. 344 (2009), 317-327.

\bibitem{C} Ch. H. Lu, {\it Solutions to degenerate complex Hessian equations,} J. Math. Pures Appl. (100) (2013), 785-805.

\bibitem{DC} S. Dinew, Ch. H. Lu,  {\it Mixed Hessian inequalities and uniqueness in the class $E(X,\omega,m)$}, Math. Z. 279 (2015), no. 3-4, 753-766.

\bibitem{DK1} S. Dinew, S. Ko\l odziej,  {\it A priori estimates for complex Hessian equations,} Anal. PDE 7 (2014), no. 1, 227-244.

\bibitem{DK} S. Dinew, S. Ko\l odziej, {\it Liouville and Calabi-Yau type theorems for complex Hessian equations}, Amer. J. Math., 139 (2017), 403-415.

\bibitem{Gu} P. Guan, {\it Extremal Function associated to Intrinsic Norms},
Annals of Mathematics, 156 (2002), 197-211.

\bibitem{GTW} P. Guan, N. Trudinger, X.-J.  Wang, {\it On the Dirichlet problem for degenerate
Monge-Amp\`ere equations}, Acta Math. 182 (1999), no. 1, 87-104. 

\bibitem{Hou} Z. Hou, {Complex Hessian equation on K\"ahler manifold}, International Mathematics Research Notices (2009), no.16, 3098-3111.

\bibitem{HMW} Z. Hou, X.-N. Ma, D. Wu, {\it A second order
estimate for complex Hessian equations on a compact K\"ahler
manifold},  Math. Res. Lett., \rm 17 (2010), 547-561.

\bibitem{ITW} N. Ivochkina, N. Trudinger, X.-J. Wang, {\it The Dirichlet problem for degenerate Hessian equations},
 Comm. Partial Differential Equations 29 (2004), 219-235.
 
\bibitem{Kry1} N.V. Krylov, {\it Smoothness of the payoff function for a controllable process in a domain}, Math. USSR-Izv. 34 (1990), 65-95.

\bibitem{Kry2} N.V. Krylov, {\it Weak interior second order derivative estimates for degenerate nonlinear elliptic equations}, Diff. Int. Eqns 7 (1994), 133-156.
 
 \bibitem{L} S.-Y. Li, {\it On the Dirichlet problems for symmetric function equations of the eigenvalues of
 the complex Hessian}, Asian J. Math. 8 (2004), 87-106. 
 
 \bibitem{PPZ} D. H. Phong, S. Picard, X. Zhang, {\it The Fu-Yau equation with negative slope parameter}, Invent. Math. 209 (2017), no. 2, 541-576.
 
 \bibitem{PPZ1} D. H. Phong, S. Picard, X. Zhang, {\it Fu-Yau Hessian equation}, arXiv:1801.09842.
 
 \bibitem{PT} D. H. Phong, D. T. T\^o, {\it Fully nonlinear parabolic equations on compact Hermitian manifolds}, arXiv:1711.10697
 
 \bibitem{P}  Sz. Pli\'s, {\it A counterexample to the regularity of the degenerate complex Monge-Amp\`ere equation},
  Ann. Polon. Math. 86 (2005), no. 2, 171-175.
  
\bibitem{Sze1}
G. Szekelyhidi, {\it Fully non-linear elliptic equations on compact Hermitian manifolds},
  arXiv:1505.07299 (2015), to appear in J. Differential Geom.
  
  \bibitem{TW} V. Tosatti, B. Weinkove, {\it The Monge-Amp\`ere equation for $(n-1)$-pluri-subharmonic functions on a compact K\"ahler manifold},
  J. Amer. Math. Soc. 30 (2017), no. 2, 311-346.
    
\bibitem{WX} Q. Wang and C.-J. Xu, {\it $C^{1, 1}$ solution of the Dirichlet problem for degenerate k-Hessian equations}, Nonlinear Analysis, 104 (2014) 133-146.
  
  \bibitem{W1} X.-J. Wang, {\it Some counterexamples to the regularity of Monge-Amp\`ere equations}, Proc. Amer. Math. Soc. 123 (1995), 841-845.
  
\bibitem{W2} X.-J.  Wang,  {\it The k-Hessian equation,} Geometric analysis and PDEs, 177-252, Lecture Notes in Math., 1977, Springer, Dordrecht, 2009.

 \bibitem{Y} S.T. Yau, {\em On the Ricci curvature of a compact K\"ahler
manifold and the complex Monge-Ampere equation}, Comm. Pure Appl.
Math. {\bf 31}(1978), 339-411.

}
\end{thebibliography}
\end{document}